\theoremstyle{plain}
\newtheorem{theorem}{Theorem}[section]
\newtheorem{proposition}[theorem]{Proposition}
\newtheorem{corollary}[theorem]{Corollary}
\theoremstyle{remark}
\theoremstyle{definition}
\newtheorem{example}[theorem]{Example}
\newtheorem{definition}[theorem]{Definition}
\numberwithin{equation}{section}
\begin{document}

\title[Universal coacting Poisson Hopf algebras]{Universal coacting Poisson Hopf algebras}

\author{A.L. Agore}
\address{Simion Stoilow Institute of Mathematics of the Romanian Academy, P.O. Box 1-764, 014700 Bucharest, Romania}
\address{Vrije Universiteit Brussel, Pleinlaan 2, B-1050 Brussels, Belgium}
\email{ana.agore@vub.be,\, ana.agore@gmail.com}

\keywords{Poisson algebra, Poisson Hopf algebra, Poisson comodule algebra, universal coacting Poisson Hopf algebra}

\begin{abstract} We introduce the analogue of Manin's universal coacting (bialgebra) Hopf algebra for Poisson algebras. First, for two given Poisson algebras $P$ and $U$, where $U$ is finite dimensional, we construct a Poisson algebra $\mathcal{B}(P,\, U)$ together with a Poisson algebra homomorphism $\psi_{\mathcal{B}(P,\,U)} \colon P \to U \otimes \mathcal{B}(P,\, U)$ satisfying a suitable universal property. $\mathcal{B}(P,\, U)$ is shown to admit a Poisson bialgebra structure for any pair of Poisson algebra homomorphisms subject to certain compatibility conditions. If $P=U$ is a finite dimensional Poisson algebra then $\mathcal{B}(P) = \mathcal{B}(P,\, P)$ admits a unique Poisson bialgebra structure such that $\psi_{\mathcal{B}(P)}$ becomes a Poisson comodule algebra and, moreover, the pair $\bigl(\mathcal{B}(P),\, \psi_{\mathcal{B}(P)}\bigl)$ is the universal coacting bialgebra of $P$. The universal coacting Poisson Hopf algebra $\mathcal{H}(P)$ on $P$ is constructed as the initial object in the category of Poisson comodule algebra structures on $P$ by using the free Poisson Hopf algebra on a Poisson bialgebra (\cite{A1}). 
\end{abstract}

\subjclass[2010]{16T05, 16T15, 17B63}

\thanks{This research was supported by a grant of Romanian Ministery of Research and Innovation, CNCS - UEFISCDI, project number PN-III-P1-1.1-TE-2016-0124, within PNCDI III. The author is a fellow of FWO (Fonds voor Wetenschappelijk Onderzoek -- Flanders).}

\maketitle

\section{Introduction}

Poisson structures appear naturally in various branches of mathematics and mathematical physics ranging from algebra and non-commutative geometry to classical and quantum mechanics. A Poisson Hopf algebra is both a Poisson algebra and a Hopf algebra sharing the same (commutative) algebra structure and such that the comultiplication and the counit are Poisson algebra homomorphisms. As one of their many important applications we mention the quantization theory for Lie bialgebras. Since they were first considered by Drinfel'd (\cite{Dri}) more than 30 years ago, Poisson Hopf algebra structures appeared gradually in the study of quantum groups, homological algebra, Poisson geometry or representation theory. Among the naturally occurring examples of such objects is the algebra of smooth functions on a Poisson group. Another important example of a Poisson Hopf algebra which recently surfaced (see \cite{zhuang}) is the associated graded algebra with respect to the coradical filtration of a connected Hopf algebra. This motivates the idea of considering connected Hopf algebras as some sort of deformations of Poisson Hopf algebras. Therefore, studying (co)representations of Poisson Hopf algebras will lead to a better understanding of (co)representations of Hopf algebras in general. 

In the present note we introduce the universal coacting Poisson Hopf algebra of a finite dimensional Poisson algebra. More precisely, given a Poisson algebra $P$ we construct a Poisson Hopf algebra $\mathcal{H}(P)$ together with a right Poisson $\mathcal{H}(P)$-comodule algebra structure $\psi_{\mathcal{H}(P)}$ on $P$ which is an initial object in the category of Poisson comodule algebra structures on $P$ (see Definition~\ref{00} and Definition~\ref{01}).

This construction can be seen as the Poisson Hopf algebra counterpart of the universal coacting bialgebra/Hopf algebra of a (graded) algebra $A$ which appeared almost  simultaneously in the work of Yu.I. Manin (\cite{Manin}) and D. Tambara (\cite{Tambara}). In some sense Manin's universal coacting Hopf algebra captures the non-commutative symmetries of the algebra $A$ and it plays the role of a symmetry group in non-commutative geometry.

Aside from being of interest in his own right, the universal coacting Poisson Hopf algebra might have some relevance in physics as well. To be more precise, integrable systems have been recently constructed by using  Poisson comodule algebras in \cite{BMR}. Furthermore, comodule algebra symmetry was shown to be applicable to the construction of new integrable deformations of certain Smorodinsky-Winternitz systems (\cite{BHO}).

An outline of the paper is as follows. In Section~\ref{pre} we provide the necessary background on Poisson (Hopf) algebras together with some category theory results which will be used in the sequel. Section~\ref{univ_coact} contains the main results. Given two Poisson algebras $P$ and $U$, where $U$ is finite dimensional, the universal Poisson algebra $\bigl(\mathcal{B}(P,\,U),\, \psi_{\mathcal{B}(P,\, U)}\bigl)$ of $P$ and $U$ is constructed in Theorem~\ref{1}. Proposition~\ref{pro} shows that the universal Poisson algebra gives rise to two functors $\mathcal{L}(-,\, U) \colon \mathbf{Poiss}_{F} \to \mathbf{Poiss}_F$ and respectively $\mathcal{R}(P,\, -) \colon \bigl(\mathbf{Poiss}_{F}^{fd}\bigr)^{op} \to \mathbf{Poiss}_F$. Furthermore, the functor $\mathcal{L}(-,\,U)$ is left adjoint to the tensor functor $U \otimes -  \colon \mathbf{Poiss}_{F} \to \mathbf{Poiss}_F$ (Theorem~\ref{th11}). It turns out that any Poisson algebra homomorphism $f \colon U \to P$ gives rise to a coassociative Poisson algebra homomorphism $\Delta_{f} \colon \mathcal{B}(P,\, U) \to \mathcal{B}(P,\, U) \otimes \mathcal{B}(P,\, U)$ and, moreover, if $g \colon P \to U$ is another Poisson algebra homomorphism compatible with $f$ in a certain sense (see Theorem~\ref{imp1}) then $\mathcal{B}(P,\, U)$ becomes a Poisson bialgebra whose counit $\varepsilon_{g} \colon \mathcal{B}(P,\, U) \to F$ is induced by $g$. When $P = U$ is a finite dimensional Poisson algebra the universal Poisson algebra $\mathcal{B}(P, P)$, denoted simply by $\mathcal{B}(P)$, admits a unique Poisson bialgebra structure which makes $\psi_{\mathcal{B}(P)}$ into a Poisson comodule algebra. With this Poisson bialgebra structure, the pair $\bigl(\mathcal{B}(P),\, \psi_{\mathcal{B}(P)}\bigl)$ is proved to be the universal Poisson bialgebra of $P$ (see Definition~\ref{01}). Furthermore, the existence of a free Poisson Hopf algebra on any Poisson bialgebra (\cite{A1}) allows us to construct a universal coacting Poisson Hopf algebra $\bigl(\mathcal{H}(P),\, \psi_{\mathcal{H}(P)}\bigl)$ on any finite dimensional Poisson algebra $P$, i.e. $\psi_{\mathcal{H}(P)}$ is a Poisson comodule algebra structure on $P$ such that for any other Poisson Hopf algebra $H$ and any Poisson comodule algebra structure $\rho_{H} \colon P \to P \otimes H$ there exists a unique Poisson Hopf algebra homomorphism $g\colon \mathcal{H}(P) \to H$ for which the following diagram commutes:
$$
\xymatrix{ P\ar[rr]^-{\psi_{\mathcal{H}(P)}}\ar[rrd]_{\psi_{H}}  & {} & {P \otimes \mathcal{H}(P)}\ar[d]^{ \mathds{1}_{P} \otimes g}  \\
{} & {} & {P \otimes H}
}
$$

\section{Preliminaries}\label{pre}
Throughout this paper, $F$ denotes a field and unless otherwise specified, all vector spaces, unadorned tensor
products, homomorphisms, (co)algebras,  bialgebras, Lie algebras, Poisson algebras,
Hopf algebras and Poisson Hopf algebras are over $F$. All (co)algebras considered are (co)associative and (co)unital.  The multiplication and unit maps of an algebra $A$ are denoted by $m_{A}$ and $u_{A}$ (or simply by $1_{A}$). Similarly, we use $\Delta_{C}$ and $\varepsilon_{C}$ to designate the comultiplication and counit of a coalgebra $C$. Given a vector space $V$, we denote by $\mathds{1}_{V}$ the identity map on $V$ and by $\mu_{V}\colon V \to V \otimes F$ the linear isomorphism defined by $\mu_{V}(v) = v \otimes 1_{F}$ for all $v \in V$. All comodules considered in this paper will typically be right comodules.

Our notation for the standard categories is as follows:  ${}_{F}{\mathcal {M}}$
(vector spaces), $\mathbf{Coalg}_F$ (coalgebras), $\mathbf{Poiss}_F$ (Poisson algebras), $\mathbf{Poiss}^{fd}_{F}$ (finite dimensional Poisson algebras), $\mathbf{PoissBialg}_F$ (Poisson bialgebras), $\mathbf{PoissHopf}_F$ (Poisson Hopf algebras). Given a category $\mathcal{C}$ we denote by ${\rm Hom}_{\mathcal{C}}(C,\,D)$ the set of morphisms in $\mathcal{C}$ between the objects $C$ and $D$. 

Recall (from \cite[Definition 1.1]{LPV}, for example) that a Poisson algebra is both an associative
commutative algebra and a Lie algebra living on the same vector
space $P$ such that for all $p$, $q$, $r
\in P$ we have:
$$[p,\, qr] = [p,\,q]\,r + q\, [p,\, r]$$

A linear map $f: P_{1} \to P_{2}$ is called a morphism of
Poisson algebras if $f$ is both an algebra homomorphism as well as a Lie
algebra homomorphism. A Poisson ideal is a linear subspace which
is both an ideal with respect to the associative product as well
as a Lie ideal. If $\mathcal{I}$ is a Poisson ideal of $P$ then
the quotient $P/\mathcal{I}$ becomes a Poisson algebra with respect to the obvious algebra and Lie algebra structures. 

If $P_{1}$, $P_{2}$ are Poisson algebras then the tensor product $P_{1} \otimes
P_{2}$ has a Poisson algebra structure defined for all $p$, $r\in
P_1$ and $q$, $s\in P_2$ by:
\begin{equation}\label{1}
(p \otimes q) \cdot (r \otimes s) := pr \otimes qs, \qquad \left[p
\otimes q, \, r \otimes s\right]_{P_{1} \otimes P_{2}} := pr \otimes [q, \, s]_{P_{2}}
+ [p, \, r]_{P_{1}} \otimes qs 
\end{equation}
Furthermore, for any Poisson algebra $P$ we have the obvious tensor product functor $P \otimes - \colon \mathbf{Poiss}_{F} \to \mathbf{Poiss}_{F}$.

Note that if $(\mathfrak{g},\, [\,,\,])$ is a Lie algebra then the symmetric algebra $S(\mathfrak{g})$ carries a canonical Poisson algebra structure with Lie algebra structure $\{\,,\,\}$ induced by that of $\mathfrak{g}$, i.e. $\{g,\,h\} = [g,\,h]$ for all $g$, $h \in \mathfrak{g}$. In particular, if $\mathfrak{g} = \mathcal{F}(V)$ is the free Lie algebra on a vector space $V$ then the symmetric algebra $S\bigl(\mathcal{F}(V)\bigl)$ is called the free Poisson algebra on $V$ and we will denote it by $\mathcal{P}(V)$. In fact, the functor sending a vector space $V$ to the free Poisson algebra $\mathcal{P}(V)$ provides a left adjoint for the forgetful functor $\mathbf{Poiss}_F \to {}_{F}{\mathcal {M}}$, i.e. there exists a linear homomorphism $\mu_{V} \colon V \to \mathcal{P}(V)$ such that for every Poisson algebra $P$ and any linear homomorphism $f \colon V \to P$ there exists a unique Poisson algebra homomorphism $g \colon \mathcal{P}(V) \to P$ such that the following diagram is commutative:
\begin{eqnarray*}
\xymatrix{ V \ar[r]^-{\mu_{V}}\ar[dr]_{f}   & {\mathcal{P}(V)}\ar[d]^{g} \\
{} & {P}
}
\end{eqnarray*}

A commutative bialgebra $B$ together with a Poisson bracket
$[\cdot, \, \cdot]_{B}$ is called a Poisson bialgebra if
the comultiplication $\Delta_{B}$ and the counit $\varepsilon_{B}$
are Poisson algebra homomorphisms.
Furthermore, if $B$ is a Hopf algebra then $B$ is called a Poisson Hopf algebra. It is straightforward to see that the
antipode $S_{B}$ is a Poisson algebra anti-morphism, i.e. for all
$a$, $b \in B$ we have: $S_{B}\bigl([a, \, b]_{B}\bigl) =
[S_{B}(b), \, S_{B}(a)]_{B}$. A morphism of Poisson
bialgebras is both a morphism of Poisson algebras and a morphism
of coalgebras. It can be easily seen that $\mathbf{PoissHopf}_F$ is a full subcategory of the category
$\mathbf{PoissBialg}_F$, i.e. a morphism of Poisson bialgebras between two
Poisson Hopf algebras is automatically a Poisson Hopf morphism.

Recall that both forgetful functors $\mathbf{PoissHopf}_F \to \mathbf{Coalg}_F$ and $\mathbf{PoissHopf}_F \to \mathbf{PoissBialg}_F$ admit left adjoints; in other words, there exists a free Poisson Hopf algebra on every Poisson bialgebra (resp. coalgebra). In particular, we have the following (\cite[Theorem 4.2]{A1}):

\begin{proposition}\label{00.00}
Let $B$ be a Poisson bialgebra. There exists a Poisson Hopf algebra $H(B)$ and a Poisson bialgebra homomorphism $\alpha_{B}\colon B \to H(B)$ such that for every Poisson Hopf algebra $P$ and every Poisson bialgebra homomorphism $f \colon B \to P$, there is a unique Poisson Hopf algebra homomorphism $g \colon H(B) \to P$ such that the following diagram is commutative:
\begin{eqnarray*}
\xymatrix{ B \ar[r]^-{\alpha_{B}}\ar[dr]_{f}   & {H(B)}\ar[d]^{g} \\
{} & {P}
}
\end{eqnarray*}
\end{proposition}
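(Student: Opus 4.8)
The plan is to realise $H(B)$ as the \emph{free Poisson Hopf algebra} on the Poisson bialgebra $B$, i.e.\ as the value at $B$ of a left adjoint to the forgetful functor $\mathbf{PoissHopf}_F \to \mathbf{PoissBialg}_F$, built by a Takeuchi-type tower. Since the underlying algebra of every Poisson Hopf algebra is commutative, its antipode $S$ is automatically an algebra endomorphism, a coalgebra anti-endomorphism, and negates the bracket; thus an antipode on $B$ is nothing but a Poisson bialgebra morphism $B \to B^{\mathrm{op}}$, where $B^{\mathrm{op}}$ denotes the copy of $B$ carrying the opposite comultiplication $\tau \circ \Delta_B$ and the negated bracket $-[\cdot,\cdot]_B$ (with multiplication and counit unchanged). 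One first checks that $B^{\mathrm{op}}$ is again a Poisson bialgebra. The idea is therefore to adjoin such a morphism freely and then impose the antipode axioms.

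Concretely, I would take a countable family of copies $B_0, B_1, B_2, \dots$ with $B_{2i} = B$ and $B_{2i+1} = B^{\mathrm{op}}$, so that the identity on the underlying space gives Poisson bialgebra isomorphisms $S_n \colon B_n \to B_{n+1}$. Form their coproduct $W$ in $\mathbf{PoissBialg}_F$; because all objects are commutative this coproduct is computed as the (suitably directed) tensor product, with the Poisson bracket on tensor factors given by the formula (\ref{1}), and the maps $S_n$ assemble into a single Poisson bialgebra endomorphism $S \colon W \to W$ shifting the tower. Then set $H(B) = W / J$, where $J$ is the smallest Poisson Hopf ideal (a two-sided ideal for the product, a Lie ideal for the bracket, a coideal for $\Delta_W$, and $S$-stable) containing the elements $\sum S(b_{(1)})\, b_{(2)} - \varepsilon_B(b)\, 1_W$ and $\sum b_{(1)}\, S(b_{(2)}) - \varepsilon_B(b)\, 1_W$ for $b \in B_0$. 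Taking $\alpha_B$ to be the composite $B = B_0 \hookrightarrow W \twoheadrightarrow H(B)$ yields a Poisson bialgebra homomorphism.

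For the universal property, given a Poisson Hopf algebra $P$ and a Poisson bialgebra map $f \colon B \to P$, I would define $f_n \colon B_n \to P$ by $f_n = S_P^{\,n} \circ f$; each $f_n$ is a Poisson bialgebra morphism precisely because $S_P$ reverses comultiplication and bracket, which matches the alternating twist in the tower. By the coproduct property these glue to a Poisson bialgebra morphism $W \to P$, which sends the generators of $J$ to the corresponding antipode expressions in $P$, hence to $0$; it therefore descends to a Poisson bialgebra morphism $g \colon H(B) \to P$ with $g \circ \alpha_B = f$, and $g$ is automatically a Poisson Hopf morphism since $\mathbf{PoissHopf}_F$ is full in $\mathbf{PoissBialg}_F$. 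Uniqueness follows because $H(B)$ is generated as a Poisson algebra by $\alpha_B(B)$ together with its images under the powers of the shift map, on which any such $g$ is forced by $g \circ \alpha_B = f$ and by compatibility with the antipodes.

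The main obstacle I expect is verifying that $J$ is simultaneously a coideal, a Poisson (associative \emph{and} Lie) ideal, and stable under $S$, and that $H(B)$ is then a genuine Poisson Hopf algebra with $S$ inducing an antipode: the delicate point is the interaction of the bracket with the Leibniz identity after passing to the quotient, together with the bookkeeping needed to keep $\Delta_W$ and $\varepsilon_W$ Poisson homomorphisms on the infinite coproduct and to propagate the antipode axiom from $\alpha_B(B)$ to all of $H(B)$. An alternative, less explicit route avoiding these computations would be to invoke an adjoint functor theorem: one shows that $\mathbf{PoissHopf}_F$ is complete, that the forgetful functor to $\mathbf{PoissBialg}_F$ preserves limits, and that a solution-set condition holds by bounding the cardinality of the test Poisson Hopf algebras, whence the left adjoint exists abstractly.
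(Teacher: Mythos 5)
First, a point of comparison: the paper does not prove Proposition~\ref{00.00} at all --- it is imported verbatim from \cite[Theorem 4.2]{A1} --- so your construction has to be judged on its own merits rather than against an internal argument. Judged that way, it has one fatal gap: you identify the coproduct of the tower $(B_n)_{n \geq 0}$ in $\mathbf{PoissBialg}_F$ with the (directed) tensor product. That identification is correct for commutative associative algebras, but it is false for Poisson algebras: in the tensor product Poisson structure recalled in the Preliminaries, $[p \otimes q,\, r \otimes s] = pr \otimes [q,s] + [p,r] \otimes qs$, the two factors automatically Poisson-commute, since $[p \otimes 1,\, 1 \otimes s] = p \otimes [1,s] + [p,1] \otimes s = 0$. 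Hence $\bigotimes_n B_n$ has the universal property only for families of Poisson morphisms whose images pairwise Poisson-commute in the target. Your key gluing step --- ``by the coproduct property the maps $f_n = S_P^{\,n} \circ f$ assemble into a Poisson bialgebra morphism $W \to P$'' --- therefore fails: the subalgebras $S_P^{\,n}(f(B))$ of $P$ have no reason to Poisson-commute with one another (already $[f(b), f(b')]_P \neq 0$ in general), so no such morphism out of the tensor product exists, and the universal property collapses. The genuine coproduct in $\mathbf{Poiss}_F$ and $\mathbf{PoissBialg}_F$ is a strictly larger free-product-type object, and its construction is itself one of the nontrivial points of \cite{A1}; this is precisely the machinery the present paper outsources.

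Two further problems are more repairable but real as written. First, the identity map $B_n \to B_{n+1}$ is not a Poisson bialgebra isomorphism but an \emph{anti}-isomorphism (it intertwines $\Delta$ with $\tau \circ \Delta$ and $[\cdot,\cdot]$ with $-[\cdot,\cdot]$); consequently the shift $S \colon W \to W$ is an anti-endomorphism, i.e.\ a morphism $W \to W^{\mathrm{op}}$, not an endomorphism as you assert. This is not pedantry: for a commutative Hopf algebra one has $S^2 = \mathds{1}$, so an antipode that is also a coalgebra morphism forces cocommutativity, and in the Poisson setting a bracket-preserving antipode forces $2S([a,b]) = 0$; a genuine Poisson bialgebra endomorphism $S$ could thus never be the antipode outside degenerate cases. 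Second, even after fixing both points, proving that the induced map on $W/J$ is an antipode requires propagating the identities $\sum S(x_{(1)})\,x_{(2)} = \varepsilon(x)1$ from the generators to all of $H(B)$; since $H(B)$ is generated by the images of the $B_n$ as a \emph{Poisson} algebra, you need the set of elements satisfying the antipode identities to be closed under the bracket as well as under multiplication --- a verification your sketch only flags as an ``obstacle'' without carrying it out. Your fallback suggestion (completeness of $\mathbf{PoissHopf}_F$, limit preservation of the forgetful functor, and a solution-set condition) is a sound outline and is much closer in spirit to an argument that actually works.
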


Let $H$ be a Poisson bialgebra/Hopf algebra. Then a Poisson $H$-comodule algebra (see \cite{BHO}, for example) is a Poisson algebra $P$ endowed with a comodule structure $\rho \colon P \to P \otimes H$ over $H$ which is a Poisson algebra homomorphism ($P \otimes H$ has the usual tensor product Poisson algebra structure given in \ref{1}),  i.e. for all $p$, $q \in P$ we have: 
\begin{eqnarray}
\rho\bigl([p,\,q]_{P}\bigl) = [\rho(p),\, \rho(q)]_{P \otimes Q}, \quad \rho(ab)= \rho(a) \rho(b), \quad\rho(1_P)=1_P\otimes 1_{H}.\label{1.2}
\end{eqnarray}
For unexplained notions pertaining to Hopf algebra theory we refer to the classical textbook \cite{Abe}
and to \cite{LPV} for basic properties of Poisson algebras.

\section{The universal coacting Poisson Hopf algebra of a Poisson algebra}\label{univ_coact}

In this section we introduce the universal coacting Poisson Hopf algebra of a Poisson algebra. To start with, we first construct the universal Poisson algebra of a pair of Poisson algebras. 

\begin{definition}\label{00}
Let $P$, $U$ be two Poisson algebras. The \textit{universal Poisson algebra of $P$ and $U$} is a Poisson algebra $\mathcal{B}(P, U)$ together with a Poisson algebra homomorphism $\psi_{\mathcal{B}(P, U)}\colon P \to U \otimes\, \mathcal{B}(P, U)$ such that for any Poisson algebra $Q$ and any Poisson algebra homomorphism $f \colon P \to U \otimes Q$ there exists a unique Poisson algebra homomorphism $g\colon \mathcal{B}(P, U) \to Q$ which makes the following diagram commutative:
\begin{equation}\label{0}
\xymatrix{ P\ar[rr]^-{\psi_{\mathcal{B}(P, U)}}\ar[rrd]_{f}  & {} & {U \otimes \mathcal{B}(P, U)}\ar[d]^{ \mathds{1}_{U} \otimes g}  \\
{} & {} & {U \otimes Q}
} \qquad {\rm i.e.}\,\,\, ( \mathds{1}_{U} \otimes g) \circ \psi_{\mathcal{B}(P, U)} = f.
\end{equation}
If $P=U$ we denote the corresponding universal Poisson algebra $\mathcal{B}(P, P)$ simply by $\mathcal{B}(P)$. 
\end{definition}

\begin{theorem}\label{t1}
If $P$ and $U$ are two Poisson algebras and $U$ is finite dimensional, there exists a universal Poisson algebra of $P$ and $U$.
\end{theorem}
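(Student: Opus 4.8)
The plan is to imitate the Manin--Tambara construction of a universal coacting object, adapted to the Poisson setting, by presenting $\mathcal{B}(P,U)$ as a suitable quotient of a free Poisson algebra. Fix a basis $\{u_1,\dots,u_n\}$ of the finite dimensional Poisson algebra $U$, with dual basis $\{u_1^*,\dots,u_n^*\}\subseteq U^*$, and record the structure constants $u_iu_j=\sum_k c_{ij}^k u_k$, $[u_i,u_j]_U=\sum_k d_{ij}^k u_k$ and $1_U=\sum_k \lambda_k u_k$. First I would form the free Poisson algebra $\mathcal{P}(U^*\otimes P)$ on the vector space $U^*\otimes P$, with canonical linear map $\iota\colon U^*\otimes P\to \mathcal{P}(U^*\otimes P)$, and define a linear map
\[
\widetilde{\psi}\colon P\to U\otimes \mathcal{P}(U^*\otimes P),\qquad \widetilde{\psi}(p)=\sum_{i=1}^n u_i\otimes \iota(u_i^*\otimes p).
\]
Here $\widetilde{\psi}(p)$ is the universal way of writing an element of $U\otimes(-)$ whose $U$-coefficients depend linearly on $p$; the linear maps $p\mapsto \iota(u_i^*\otimes p)$ are the ``matrix coefficients'' that must eventually generate $\mathcal{B}(P,U)$.

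Since $\mathcal{P}(U^*\otimes P)$ is free, $\widetilde{\psi}$ is only a linear map and not yet a Poisson algebra homomorphism, so the next step is to impose exactly the relations that make it one. Expanding $\widetilde{\psi}(p)\widetilde{\psi}(q)$, $[\widetilde{\psi}(p),\widetilde{\psi}(q)]$ and $\widetilde{\psi}(1_P)$ by means of \eqref{1} and collecting the $U$-coefficients via the structure constants, I would let $\mathcal{I}$ be the Poisson ideal of $\mathcal{P}(U^*\otimes P)$ generated, for all $p,q\in P$ and all $k=1,\dots,n$, by the elements
\begin{align*}
&\iota(u_k^*\otimes pq)-\sum_{i,j} c_{ij}^k\,\iota(u_i^*\otimes p)\,\iota(u_j^*\otimes q),\\
&\iota(u_k^*\otimes [p,q])-\sum_{i,j}\Bigl(c_{ij}^k\,[\iota(u_i^*\otimes p),\iota(u_j^*\otimes q)]+d_{ij}^k\,\iota(u_i^*\otimes p)\,\iota(u_j^*\otimes q)\Bigr),\\
&\iota(u_k^*\otimes 1_P)-\lambda_k 1.
\end{align*}
I would then set $\mathcal{B}(P,U):=\mathcal{P}(U^*\otimes P)/\mathcal{I}$ and define $\psi_{\mathcal{B}(P,U)}\colon P\to U\otimes\mathcal{B}(P,U)$ as $\widetilde{\psi}$ followed by $\mathds{1}_U\otimes\pi$, where $\pi$ is the canonical projection. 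By construction $\mathcal{I}$ is precisely the ideal of relations forcing $\widetilde{\psi}$ to respect multiplication, bracket and unit, so $\psi_{\mathcal{B}(P,U)}$ is a Poisson algebra homomorphism.

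To verify the universal property, take a Poisson algebra $Q$ and a Poisson algebra homomorphism $f\colon P\to U\otimes Q$, and write $f(p)=\sum_i u_i\otimes f_i(p)$ with $f_i\colon P\to Q$ linear. I would define a linear map $U^*\otimes P\to Q$ by $u_i^*\otimes p\mapsto f_i(p)$; freeness of $\mathcal{P}(U^*\otimes P)$ then yields a unique Poisson algebra homomorphism $\widehat{f}\colon \mathcal{P}(U^*\otimes P)\to Q$ extending it. The key point is that $\widehat{f}$ annihilates the three families of generators of $\mathcal{I}$: this is exactly a restatement, coefficient by coefficient in the basis of $U$, of the identities $f(pq)=f(p)f(q)$, $f([p,q])=[f(p),f(q)]$ and $f(1_P)=1_U\otimes 1_Q$, which hold because $f$ is a Poisson algebra homomorphism. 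Hence $\widehat{f}$ descends to a Poisson algebra homomorphism $g\colon \mathcal{B}(P,U)\to Q$, and unravelling the definitions gives $(\mathds{1}_U\otimes g)\circ\psi_{\mathcal{B}(P,U)}=f$. For uniqueness, any $g$ satisfying this identity must send the generator $\pi\bigl(\iota(u_i^*\otimes p)\bigr)$ to $f_i(p)$, as is read off by pairing the $U$-slot with $u_i^*$; since these elements generate $\mathcal{B}(P,U)$ as a Poisson algebra, $g$ is forced.

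I expect the main obstacle to lie in the bookkeeping behind the key verification above: one must confirm that, after expanding the tensor-product Poisson structure \eqref{1} and matching coefficients of each $u_k$, the vanishing of $\widehat{f}$ on the generators of $\mathcal{I}$ is genuinely equivalent to $f$ being multiplicative, bracket-preserving and unital, with no redundant or missing relations. A secondary point worth recording is that the construction does not depend on the chosen basis of $U$: both $\widetilde{\psi}$ and the ideal $\mathcal{I}$ admit basis-free descriptions through the evaluation and coevaluation of the finite dimensional space $U$, so different bases produce canonically isomorphic universal Poisson algebras. Everything else, namely associativity, commutativity and the Leibniz identity on the quotient, is inherited automatically because $\mathcal{I}$ is a Poisson ideal.
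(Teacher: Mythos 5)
Your proposal is correct and is essentially the paper's own proof in coordinate-free clothing: the paper takes the free Poisson algebra on formal variables $h_{si}$ (which span exactly $U^*\otimes P$ once bases are fixed), imposes the same multiplicativity, bracket and unit relations as a Poisson ideal, and verifies the universal property by the same extend-and-descend argument. The only differences are cosmetic --- your relations are indexed by arbitrary $p,q\in P$ rather than basis elements (equivalent by bilinearity), and you spell out the uniqueness step, which the paper leaves implicit.
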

\begin{proof}
Consider $B_{P} = \{x_{i} ~|~ i \in I \}$ and $B_{U} = \{y_{j} ~|~ j = 1, 2, \cdots, n\}$ to be $F$ bases of $P$ and respectively $U$ such that $x_{1} = 1_{P}$ and $y_{1} = 1_{U}$. For all $i$, $j \in I$ there exist two finite subsets $A_{ij}$ and $B_{ij}$ of $I$ such that: 
\begin{eqnarray}
x_{i}\,x_{j} = \sum_{k \in A_{ij}} \alpha_{ij}^{k} \, x_{k}, && \big[x_{i},\,x_{j}\big]_{P} = \sum_{k \in B_{ij}} \beta_{ij}^{k} \, x_{k},\label{eq1}
\end{eqnarray}
for some scalars $\alpha_{ij}^{k}$, $\beta_{ij}^{k}$ in $F$. Similarly, for all $l$, $t$, $s = 1, 2, \cdots, n$ we denote by $\gamma_{lt}^{s}$, $\tau_{lt}^{s}$ the scalars in $F$ such that:
\begin{eqnarray}
y_{l}\,y_{t} = \sum_{s=1}^{n} \gamma_{lt}^{s} \, y_{s}, && \big[y_{l},\,y_{t}\big]_{U} = \sum_{s=1}^{n} \tau_{lt}^{s} \, y_{s}. \label{eq2}
\end{eqnarray}

Furthermore, consider $W$ to be the $F$-linear span of the formal variables $h_{si}$, where $i \in I$, $s = 1,2, \cdots, n$ and let $\bigl(\mathcal{P}(W),\, \mu_{W} \bigl)$ be the free Poisson algebra on $W$. We will show that the pair $\bigl(\mathcal{B}(P, U) = \mathcal{P}(W) / \mathcal{I}, \psi_{\mathcal{B}(P, U)}\bigl)$ is the universal Poisson algebra of $P$, where $\mathcal{I}$ is the Poisson ideal generated by the set $I = I_{1} \cup I_{2}$ with:
\begin{eqnarray*}
&&\hspace*{-10mm} I_{1} = \left \{\mu_{W}(h_{s1}) - \delta_{s,\,1} 1,\, \sum_{k \in A_{ij}} \alpha_{ij}^{k}\, \mu_{W}(h_{si}) - \sum_{l, t =1}^{n} \gamma_{l t}^{s} \, \mu_{W}(h_{li}) \,\mu_{W}(h_{tj})~ \middle | ~ i, j \in I, s = 1, 2, \cdots, n  \right \}\\
&&\hspace*{-10mm} I_{2} = \left \{\sum_{k \in B_{ij}} \beta_{ij}^{k} \,\mu_{W}(h_{sk}) - \sum_{u, v=1}^{n} \, \Bigl(\gamma_{uv}^{s}\, \big[\mu_{W}(h_{ui}),\, \mu_{W}(h_{vj})\big]_{\mathcal{P}(W)} + \tau_{uv}^{s}\, \mu_{W}(h_{ui})\, \mu_{W}(h_{vj})\Bigl) ~ \middle | ~  \right.\\
&& \left. i, j \in I, s = 1, 2, \cdots, n \vphantom{\sum_{k=1}^{n}} \right\}
\end{eqnarray*}
and the Poisson algebra homomorphism $\psi_{\mathcal{B}(P, U)}$ is defined as follows for all $i \in I$:
\begin{equation}\label{eq2}
\psi_{\mathcal{B}(P, U)}(x_{i}) = \sum_{s=1}^{n} y_{s} \otimes \Bigl(\pi \circ \mu_{W}(h_{si})\Bigl)
\end{equation} 
where $\pi \colon \mathcal{P}(W) \to  \mathcal{B}(P, U)$ is the residue class homomorphism.
To this end, consider $f \colon P \to U \otimes Q$ to be a Poisson algebra homomorphism and denote 
$$
f(x_{j}) = \sum_{s=1}^{n} \, y_{s} \otimes d_{sj}, \,\,\, {\rm for\,\, all}\,\, j \in I,
$$
where $d_{sj} \in Q$. Moreover, as $f$ is a Poisson algebra homomorphism we also have:
\begin{eqnarray}
\hspace{0.7cm} && d_{s1} = \delta_{s,\,1} 1,\, \sum_{k \in A_{ij}} \alpha_{ij}^{k}\, d_{si} = \sum_{l, t =1}^{n} \gamma_{l t}^{s} \, d_{li} \,d_{tj}\\
&& \sum_{k \in B_{ij}} \beta_{ij}^{k} \,d_{sk} = \sum_{u, v=1}^{n} \, \Big(\gamma_{uv}^{s}\, \big[d_{ui},\, d_{vj}\big]_{Q} + \tau_{uv}^{s}\, d_{ui}\, d_{vj}\Big)
\end{eqnarray}
for all $i$, $j \in I$, $s = 1, 2, \cdots, n$.

Now the linear homomorphism $u \colon W \to Q$ defined by $u(h_{si}) = d_{si}$ for all $s = 1, 2, \cdots, n$, $i \in I$, extends uniquely to a Poisson algebra homomorphism $v \colon \mathcal{P}(W) \to Q$ such that $v \circ \mu_{W} = u$. Moreover, it can be easily seen that $I \subseteq {\rm ker} \,v$ and therefore there exists a unique Poisson algebra homomorphism $g \colon \mathcal{B}(P, U) \to Q$ such that $g \circ \pi = v$. Putting everything together yields:
\begin{eqnarray*}
(\mathds{1}_{U} \otimes g) \circ \psi_{\mathcal{B}(P, U)}(x_{i}) &=& (\mathds{1}_{U} \otimes g) \Bigl(\sum _{l=1}^{n} y_{l} \otimes \bigl(\pi\circ \mu_{W}(h_{li})\bigl) \Bigl)\\
 &=& \sum _{l=1}^{n} y_{l} \otimes \bigl(v \circ \mu_{W}(h_{li})\bigl)\\
 &=& \sum _{l=1}^{n} y_{l} \otimes u(h_{li})\\
 &=& \sum _{l=1}^{n} y_{l} \otimes d_{li} = f(x_{i})
\end{eqnarray*}
for all $i \in I$. Hence $g$ is the unique Poisson algebra homomorphism which makes diagram \ref{00} commute and the proof is now finished.
\end{proof}

\begin{corollary}\label{corbij}
Let $P$ and $U$ be two Poisson algebras with $U$ finite dimensional. For any Poisson algebra $Q$ we have a bijective correspondence between:
\begin{enumerate}
\item[(1)] Poisson algebra homomorphisms $f \colon P \to U \otimes Q$;
\item[(2)] Poisson algebra homomorphisms $g \colon \mathcal{B}(P, U) \to Q$.
\end{enumerate} 
\end{corollary}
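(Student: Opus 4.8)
The plan is to read off the bijection directly from the universal property established in Theorem~\ref{t1}. In one direction, given a Poisson algebra homomorphism $g \colon \mathcal{B}(P, U) \to Q$, I would associate to it the map
$$
\Phi(g) := (\mathds{1}_{U} \otimes g) \circ \psi_{\mathcal{B}(P, U)} \colon P \to U \otimes Q.
$$
The first thing to verify is that $\Phi(g)$ actually lands in the set (1), i.e. that it is a Poisson algebra homomorphism. Since $\psi_{\mathcal{B}(P, U)}$ is a Poisson algebra homomorphism by construction, it suffices to check that $\mathds{1}_{U} \otimes g$ is one as well; this is a routine consequence of the fact that the tensor product of two Poisson algebra homomorphisms is again a Poisson algebra homomorphism with respect to the structure (\ref{1}), which in turn reduces to the defining formulas for the product and the bracket on a tensor product of Poisson algebras. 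This well-definedness of $\Phi$ is the only point that requires any genuine verification.

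In the other direction, given a Poisson algebra homomorphism $f \colon P \to U \otimes Q$, I would let $\Psi(f) := g$ be the Poisson algebra homomorphism $g \colon \mathcal{B}(P, U) \to Q$ supplied by Theorem~\ref{t1}, characterized by $(\mathds{1}_{U} \otimes g) \circ \psi_{\mathcal{B}(P, U)} = f$ as in diagram (\ref{0}). Both the \emph{existence} and the \emph{uniqueness} of such a $g$ are exactly the content of the universal property, so $\Psi$ is well defined as a map from the set (1) to the set (2).

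It then remains to check that $\Phi$ and $\Psi$ are mutually inverse. The identity $\Phi \circ \Psi = \id$ is immediate: for a given $f$, the map $g = \Psi(f)$ satisfies $(\mathds{1}_{U} \otimes g) \circ \psi_{\mathcal{B}(P, U)} = f$ by definition, which says precisely that $\Phi(g) = f$. For $\Psi \circ \Phi = \id$, I would fix $g$ and set $f := \Phi(g) = (\mathds{1}_{U} \otimes g) \circ \psi_{\mathcal{B}(P, U)}$; then $g$ is a Poisson algebra homomorphism satisfying the defining relation $(\mathds{1}_{U} \otimes g) \circ \psi_{\mathcal{B}(P, U)} = f$, and by the uniqueness clause of the universal property it must coincide with $\Psi(f)$. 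Everything beyond the tensor-functor check in the first paragraph is a formal manipulation of the universal property, so I do not expect any real obstacle here.
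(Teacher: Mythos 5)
Your proof is correct and is exactly the argument the paper has in mind: the corollary is stated as an immediate consequence of Theorem~\ref{t1}, with the bijection $g \mapsto (\mathds{1}_{U} \otimes g) \circ \psi_{\mathcal{B}(P,\, U)}$ being precisely the map the paper later uses as $\theta_{X,\,Q}$ in Theorem~\ref{th11}. Your extra care in checking that $\mathds{1}_{U} \otimes g$ is a Poisson algebra homomorphism, and that the two assignments are mutually inverse via the uniqueness clause, just makes explicit what the paper leaves implicit.
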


The universal Poisson algebra gives rise to two functors defined as follows:

\begin{proposition}\label{pro}
Let $P$ and $U$ be two given Poisson algebras with $U$ finite dimensional. 
\begin{enumerate}
\item[1)] There exists a functor $\mathcal{L}(-,\, U) \colon \mathbf{Poiss}_{F} \to \mathbf{Poiss}_F$ defined as follows for any Poisson algebras $X$, $Y$ and any morphism $f \colon X \to Y$ in $\mathbf{Poiss}_F$:
$$
\mathcal{L}(X,\, U) = \mathcal{B}(X, U),\,\,\, \mathcal{L}(f,\, U)= \overline{f}  
$$
where $\overline{f} $ is the unique Poisson algebra homomorphism which makes the following diagram commute:
\begin{eqnarray}\label{funct1}
\xymatrix{ X\ar[rr]^-{\psi_{\mathcal{B}(X, U)}}\ar[rrd]_-{\psi_{\mathcal{B}(Y, U)} \circ f}  & {} & {U \otimes \mathcal{B}(X, U)}\ar[d]^{ \mathds{1}_{U} \otimes \overline{f}}  \\
{} & {} & {U \otimes \mathcal{B}(Y, U)}
} \qquad {\rm i.e.}\,\,\, (\mathds{1}_{U} \otimes \overline{f} ) \circ \psi_{\mathcal{B}(X, U)} = \psi_{\mathcal{B}(Y, U)} \circ f.
\end{eqnarray}

\item[2)] There exists a functor $\mathcal{R}(P,\, -) \colon \bigl(\mathbf{Poiss}_{F}^{fd}\bigr)^{op} \to \mathbf{Poiss}_F$ defined as follows for any finite dimensional Poisson algebras $A$, $B$ and any morphism $f^{op} \colon A \to B$ in $\bigl(\mathbf{Poiss}_{F}^{fd}\bigr)^{op}$:
$$
\mathcal{R}(P,\, A) = \mathcal{B}(P, A),\,\,\, \mathcal{R}(P,\, f^{op}) = \overline{f}  
$$
where $\overline{f} $ is the unique Poisson algebra homomorphism which makes the following diagram commute:
\begin{eqnarray*}
\xymatrix{ P\ar[rr]^-{\psi_{\mathcal{B}(P, A)}}\ar[rrdd]_-{(f \otimes \mathds{1}_{ \mathcal{B}(P, B)} )\circ \psi_{\mathcal{B}(P, B)} }  & {} & {A \otimes \mathcal{B}(P, A)}\ar[dd]^{ \mathds{1}_{A} \otimes \overline{f}}  \\
{} & {} & {}\\
{} & {} & {A \otimes \mathcal{B}(P, B)}
} \quad {\rm i.e.}\, (\mathds{1}_{A} \otimes \overline{f} ) \circ \psi_{\mathcal{B}(P, A)} = (f \otimes \mathds{1}_{ \mathcal{B}(P, B)} )\circ \psi_{\mathcal{B}(P, B)}. 
\end{eqnarray*}
\end{enumerate}
\end{proposition}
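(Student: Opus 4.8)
The plan is to obtain both functors directly from the universal property of Theorem~\ref{t1}, with every functoriality axiom reduced to the uniqueness clause of that theorem. For part 1), given a morphism $f\colon X \to Y$ in $\mathbf{Poiss}_F$, I would first note that $\psi_{\mathcal{B}(Y,U)} \circ f \colon X \to U \otimes \mathcal{B}(Y,U)$ is a Poisson algebra homomorphism, being a composite of the two Poisson algebra homomorphisms $f$ and $\psi_{\mathcal{B}(Y,U)}$. Applying Theorem~\ref{t1} with $P = X$ and $Q = \mathcal{B}(Y,U)$ to this composite then produces a \emph{unique} Poisson algebra homomorphism $\overline{f} \colon \mathcal{B}(X,U) \to \mathcal{B}(Y,U)$ making diagram~\eqref{funct1} commute, so that $\mathcal{L}(f,U) := \overline{f}$ is well defined.

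To verify functoriality I would invoke uniqueness twice. For the identity, the map $\mathds{1}_{\mathcal{B}(X,U)}$ trivially satisfies $(\mathds{1}_U \otimes \mathds{1}_{\mathcal{B}(X,U)}) \circ \psi_{\mathcal{B}(X,U)} = \psi_{\mathcal{B}(X,U)} \circ \mathds{1}_X$, so uniqueness gives $\overline{\mathds{1}_X} = \mathds{1}_{\mathcal{B}(X,U)}$. For composable $f\colon X \to Y$ and $g \colon Y \to Z$, I would show that $\overline{g}\circ\overline{f}$ satisfies the equation defining $\overline{g\circ f}$; using bifunctoriality of $\mathds{1}_U \otimes -$ together with the defining identities of $\overline{f}$ and $\overline{g}$ one computes
\[
(\mathds{1}_U \otimes (\overline{g}\circ\overline{f})) \circ \psi_{\mathcal{B}(X,U)} = (\mathds{1}_U \otimes \overline{g}) \circ \psi_{\mathcal{B}(Y,U)} \circ f = \psi_{\mathcal{B}(Z,U)} \circ (g \circ f),
\]
whence uniqueness forces $\overline{g\circ f} = \overline{g}\circ\overline{f}$, i.e. $\mathcal{L}(g\circ f, U) = \mathcal{L}(g,U)\circ\mathcal{L}(f,U)$.

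For part 2) the argument is formally dual, the only genuine change being that the covariant composite is replaced by the map $(f \otimes \mathds{1}_{\mathcal{B}(P,B)}) \circ \psi_{\mathcal{B}(P,B)}$ built from a morphism $f\colon B \to A$ (that is, a morphism $f^{op}\colon A \to B$ in the opposite category). This is again a Poisson algebra homomorphism, since the tensor product $f \otimes \mathds{1}_{\mathcal{B}(P,B)}$ of Poisson algebra homomorphisms is one by the structure in~\eqref{1}, and Theorem~\ref{t1} (with $U = A$, $Q = \mathcal{B}(P,B)$) yields the unique $\overline{f}\colon \mathcal{B}(P,A)\to\mathcal{B}(P,B)$. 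The identity axiom is checked exactly as before. For composition I would carefully track the order-reversal: morphisms $f^{op}\colon A\to B$ and $g^{op}\colon B\to C$ correspond to $f\colon B\to A$ and $g\colon C\to B$, so $g^{op}\circ f^{op}$ corresponds to $f\circ g\colon C\to A$; I then verify that $\overline{g}\circ\overline{f}$ satisfies the defining equation of $\overline{f\circ g}$, the crux being the interchange identity $(\mathds{1}_A \otimes \overline{g})\circ (f \otimes \mathds{1}_{\mathcal{B}(P,B)}) = (f \otimes \mathds{1}_{\mathcal{B}(P,C)}) \circ (\mathds{1}_B \otimes \overline{g})$, after which uniqueness gives $\mathcal{R}(P, g^{op}\circ f^{op}) = \mathcal{R}(P,g^{op})\circ\mathcal{R}(P,f^{op})$.

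I do not expect a serious obstacle: the whole proposition is powered by the uniqueness half of Theorem~\ref{t1}, and the only points requiring attention are purely bookkeeping — confirming that each map fed into the universal property is a genuine Poisson algebra homomorphism (a composite, or a tensor product via~\eqref{1}, of such), and, in part 2), keeping the arrows pointing the correct way so that the contravariance of $\mathcal{R}(P,-)$ emerges correctly.
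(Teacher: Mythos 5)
Your proposal is correct and follows essentially the same route as the paper: $\overline{f}$ is produced by the universal property of Theorem~\ref{t1}, and both functoriality axioms (identity and composition) are deduced from its uniqueness clause, with the same chained computation for composites. Note that the paper only writes out part 1), dismissing part 2) with ``we only prove the first assertion''; your treatment of part 2) --- checking that $(f \otimes \mathds{1}_{\mathcal{B}(P,B)}) \circ \psi_{\mathcal{B}(P,B)}$ is a Poisson algebra homomorphism, using the interchange identity, and tracking the order-reversal $g^{op}\circ f^{op} \leftrightarrow f\circ g$ --- correctly supplies the half the paper omits.
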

\begin{proof}
We only prove the first assertion. To start with, if $f = \mathds{1}_{X}$ is the identity morphism on a Poisson algebra $X$ then equation \ref{funct1} comes down to the identity $( \mathds{1}_{U} \otimes \overline{f}) \circ \psi_{\mathcal{B}(X, U)} = \psi_{\mathcal{B}(X, U)}$ and $\overline{f} = \mathds{1}_{\mathcal{B}(X, U)}$ is obviously  the unique Poisson algebra homomorphisms which fulfils it. Hence $\mathcal{L}(\mathds{1}_{X},\,U) = \mathds{1}_{\mathcal{B}(X, U)}$.  

Consider now $f \colon X \to Y$ and $g \colon Y \to Z$ two Poisson algebra homomorphisms and let $\overline{f} \colon \mathcal{B}(X, U) \to \mathcal{B}(Y, U)$ and respectively $\overline{g} \colon \mathcal{B}(Y, U) \to \mathcal{B}(Z, U)$ the unique Poisson algebra homomorphisms such that:
\begin{eqnarray}
(\mathds{1}_{U} \otimes \overline{f} ) \circ \psi_{\mathcal{B}(X, U)} &=& \psi_{\mathcal{B}(Y, U)} \circ f \label{funct2}\\
(\mathds{1}_{U} \otimes \overline{g} ) \circ \psi_{\mathcal{B}(Y, U)} &=& \psi_{\mathcal{B}(Z, U)} \circ g\label{funct3}
\end{eqnarray}
We are left to show that $\overline{g} \circ  \overline{f}$ is the unique Poisson algebra homomorphism which makes the following diagram commute:
$$
\xymatrix{ X\ar[rr]^-{\psi_{\mathcal{B}(X, U)}}\ar[rrdd]_-{\psi_{\mathcal{B}(Z, U)} \circ g \circ f}  & {} & {U \otimes \mathcal{B}(X, U)}\ar[dd]^{ \mathds{1}_{U} \otimes (\overline{g} \circ \overline{f})}\\
{} & {} & {} \\
{} & {} & {U \otimes \mathcal{B}(Z, U)}
} 
$$
Indeed, we have:
\begin{eqnarray*}
\bigl(\mathds{1}_{U} \otimes (\overline{g} \circ \overline{f}) \bigl)\circ \psi_{\mathcal{B}(X, U)} &=& (\mathds{1}_{U} \otimes \overline{g})\circ \underline{(\mathds{1}_{U} \otimes \overline{f})\circ \psi_{\mathcal{B}(X, U)}}\\
&\stackrel{\ref{funct2}} {=}& \underline{(\mathds{1}_{U} \otimes \overline{g})\circ \psi_{\mathcal{B}(Y, U)}} \circ f \\
&\stackrel{\ref{funct3}} {=}& \psi_{\mathcal{B}(Z, U)} \circ g \circ f
\end{eqnarray*}
which ends the proof.
\end{proof}

\begin{theorem}\label{th11}
For any finite dimensional Poisson algebra $U$, the functor $\mathcal{L}(-,\,U)  \colon \mathbf{Poiss}_{F} \to \mathbf{Poiss}_F$ is left adjoint to the tensor functor $U \otimes -  \colon \mathbf{Poiss}_{F} \to \mathbf{Poiss}_F$.
\end{theorem}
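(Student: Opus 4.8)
The plan is to exhibit, for every pair of Poisson algebras $P$ and $Q$, a bijection
$$
\Phi_{P,Q} \colon \mathrm{Hom}_{\mathbf{Poiss}_F}\bigl(\mathcal{L}(P,\,U),\, Q\bigr) \longrightarrow \mathrm{Hom}_{\mathbf{Poiss}_F}\bigl(P,\, U \otimes Q\bigr),
$$
natural in both variables, and then to conclude via the standard hom-set characterization of adjoint functors. I would define $\Phi_{P,Q}$ on a Poisson algebra homomorphism $g \colon \mathcal{B}(P,\,U) \to Q$ by $\Phi_{P,Q}(g) = (\mathds{1}_{U} \otimes g) \circ \psi_{\mathcal{B}(P,\,U)}$. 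That this assignment is a bijection is precisely the content of Corollary~\ref{corbij}: the universal property of Definition~\ref{00} supplies, for each $f \colon P \to U \otimes Q$, a unique preimage $g$. In adjunction language, $\psi_{\mathcal{B}(P,\,U)} \colon P \to U \otimes \mathcal{L}(P,\,U)$ will play the role of the unit at $P$, and the universal property of Definition~\ref{00} is exactly the assertion that it is a universal arrow from $P$ to the functor $U \otimes -$.

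Next I would verify naturality in $Q$. For a morphism $\phi \colon Q \to Q'$ in $\mathbf{Poiss}_F$ one must show $\Phi_{P,Q'}(\phi \circ g) = (\mathds{1}_{U} \otimes \phi) \circ \Phi_{P,Q}(g)$. Expanding the left-hand side gives $\bigl(\mathds{1}_{U} \otimes (\phi \circ g)\bigr) \circ \psi_{\mathcal{B}(P,\,U)}$, and the identity $\mathds{1}_{U} \otimes (\phi \circ g) = (\mathds{1}_{U} \otimes \phi) \circ (\mathds{1}_{U} \otimes g)$ (functoriality of the tensor functor) together with associativity of composition yields the claim. This step is routine.

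The only step genuinely relying on the earlier results is naturality in $P$. Given a morphism $f \colon P' \to P$ with associated $\overline{f} = \mathcal{L}(f,\,U) \colon \mathcal{B}(P',\,U) \to \mathcal{B}(P,\,U)$, I would check that $\Phi_{P',Q}(g \circ \overline{f}) = \Phi_{P,Q}(g) \circ f$ for every $g \colon \mathcal{B}(P,\,U) \to Q$. The left-hand side equals $(\mathds{1}_{U} \otimes g) \circ (\mathds{1}_{U} \otimes \overline{f}) \circ \psi_{\mathcal{B}(P',\,U)}$, and here I would invoke the defining relation \ref{funct1} of $\overline{f}$ (with $X = P'$, $Y = P$), namely $(\mathds{1}_{U} \otimes \overline{f}) \circ \psi_{\mathcal{B}(P',\,U)} = \psi_{\mathcal{B}(P,\,U)} \circ f$, to rewrite this as $(\mathds{1}_{U} \otimes g) \circ \psi_{\mathcal{B}(P,\,U)} \circ f = \Phi_{P,Q}(g) \circ f$, as required.

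The main obstacle is not computational but conceptual bookkeeping: one must observe that the very recipe by which $\mathcal{L}(f,\,U) = \overline{f}$ was defined in Proposition~\ref{pro} is nothing but the naturality condition on the family of units $\bigl(\psi_{\mathcal{B}(-,\,U)}\bigr)$, which is exactly what forces the square in $P$ to commute. Once this identification is made, the verification above is immediate. Having produced a natural isomorphism of the two hom-bifunctors, I conclude that $\mathcal{L}(-,\,U)$ is left adjoint to $U \otimes -$, with unit $\psi_{\mathcal{B}(-,\,U)}$.
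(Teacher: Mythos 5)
Your proposal is correct and matches the paper's proof essentially verbatim: you define the same hom-set bijection $g \mapsto (\mathds{1}_{U} \otimes g) \circ \psi_{\mathcal{B}(P,\,U)}$, invoke Corollary~\ref{corbij} for bijectivity, and establish naturality in the Poisson-algebra variable via the defining relation~\ref{funct1} of $\mathcal{L}(f,\,U)$, exactly as the paper does. The only difference is cosmetic: you also spell out the routine naturality in $Q$, which the paper omits.
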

\begin{proof}
Consider $\theta \colon {\rm Hom}_{\mathbf{Poiss}_F} \bigl(\mathcal{L}(-, \, U),\, -\bigl) \to {\rm Hom}_{\mathbf{Poiss}_F} \bigl(-, \, U \otimes - \bigl) $
defined as follows for all Poisson algebras $X$ and $Q$:
$$
\theta_{X,\, Q}(g) = (\mathds{1}_{U} \otimes g ) \circ \psi_{\mathcal{B}(X, U)}.
$$
By Corollary~\ref{corbij}, $\theta_{X,\, Q} $ is a bijection of sets for all Poisson algebras $X$ and $Q$. The proof will be finished once we show that 
$\theta$ is natural in both variables. We only show the naturality in the first variable. To this end, let $f \in {\rm Hom}_{\mathbf{Poiss}_F}(X',\,X)$. Then, for any $t \in {\rm Hom}_{\mathbf{Poiss}_F} \bigl(\mathcal{L}(X, \, U),\, Q\bigl)$ we have: 
\begin{eqnarray*}
{\rm Hom}_{\mathbf{Poiss}_F} \bigl(f,\, U \otimes Q\bigl) \circ\, \theta_{X,\, Q}(t) &=& (\mathds{1}_{U} \otimes t) \circ \underline{\psi_{\mathcal{B}(X, U)} \circ f}\\
&\stackrel{(\ref{funct1})} {=}& (\mathds{1}_{U} \otimes t) \circ (\mathds{1}_{U} \otimes \mathcal{L}(f, \, U)) \circ \psi_{\mathcal{B}(X', U)}\\
&=& (\mathds{1}_{U} \otimes t \circ \mathcal{L}(f, \, U)) \circ \psi_{\mathcal{B}(X', U)}\\
&=& \theta_{X',\, Q} \bigl(t \circ \mathcal{L}(f, \, U)\bigl)\\
&=& \theta_{X',\, Q} \circ {\rm Hom}_{\mathbf{Poiss}_F} \bigl(\mathcal{L}(f, \, U),\, Q \bigl)(t)
\end{eqnarray*}
Hence ${\rm Hom}_{\mathbf{Poiss}_F} \bigl(f,\, U \otimes Q\bigl) \circ\, \theta_{X,\, Q} = \theta_{X',\, Q} \circ {\rm Hom}_{\mathbf{Poiss}_F} \bigl(\mathcal{L}(f, \, U),\, Q \bigl)$, as desired.
\end{proof}

\begin{corollary}
Let $U$ be a Poisson algebra such that the universal coacting Poisson algebra $\mathcal{P}(P, \, U)$ exists for all Poisson algebras $P$. Then $U$ is finite dimensional.
\end{corollary}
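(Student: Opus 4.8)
The plan is to read the hypothesis as an adjointness statement and then derive a contradiction from the fact that tensoring with an infinite dimensional space fails to commute with infinite products. First I would note that, by Definition~\ref{00}, the existence of the universal Poisson algebra $\bigl(\mathcal{B}(P,\,U),\,\psi_{\mathcal{B}(P,\,U)}\bigr)$ of $P$ and $U$ for a fixed $P$ says exactly that the functor
$$
\mathrm{Hom}_{\mathbf{Poiss}_F}\bigl(P,\, U \otimes -\bigr) \colon \mathbf{Poiss}_F \to \mathbf{Set}
$$
is representable, with representing object $\mathcal{B}(P,\,U)$ and universal element $\psi_{\mathcal{B}(P,\,U)}$ (the required naturality in the variable following $U \otimes -$ being immediate from functoriality of $U \otimes -$). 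Assuming this for every Poisson algebra $P$, the standard objectwise criterion for the existence of a left adjoint (the same mechanism underlying Theorem~\ref{th11}) shows that the tensor functor $U \otimes - \colon \mathbf{Poiss}_F \to \mathbf{Poiss}_F$ admits a left adjoint, namely $P \mapsto \mathcal{B}(P,\,U)$. In particular $U \otimes -$ is a right adjoint.

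Since right adjoints preserve all limits, and in particular arbitrary products, I would next conclude that for every family $(P_i)_{i \in I}$ of Poisson algebras the canonical comparison morphism
$$
U \otimes \prod_{i \in I} P_i \longrightarrow \prod_{i \in I} \bigl(U \otimes P_i\bigr)
$$
is an isomorphism in $\mathbf{Poiss}_F$. Products in $\mathbf{Poiss}_F$ are formed componentwise on the underlying vector spaces, and the forgetful functor $\mathbf{Poiss}_F \to {}_{F}\mathcal{M}$ is faithful and preserves products; applying it therefore forces the underlying linear map, which is the usual natural map $u \otimes (x_i)_i \mapsto (u \otimes x_i)_i$, to be bijective.

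To finish, I would specialise to $I = \mathbb{N}$ and $P_i = F$ (the base field with trivial bracket), so that $U \otimes F \cong U$ and $\prod_i P_i \cong F^{\mathbb{N}}$, and the comparison map becomes the natural linear map $U \otimes F^{\mathbb{N}} \to U^{\mathbb{N}}$. Writing a general element of the source as a finite sum $\sum_k u_k \otimes f_k$, its image is the family $i \mapsto \sum_k f_k(i)\, u_k$, which lies in the finite dimensional subspace $\mathrm{span}(u_1,\dots,u_k) \subseteq U$; hence the image consists exactly of the families with finite dimensional span. If $U$ were infinite dimensional I could pick a linearly independent sequence $(u_n)_{n \in \mathbb{N}}$ in $U$, and the family $(u_n)_n \in U^{\mathbb{N}}$ would then lie outside the image, contradicting surjectivity. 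Therefore $U$ must be finite dimensional.

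The only point requiring genuine care is this last surjectivity computation; the categorical reductions (representability yielding a left adjoint, and right adjoints preserving products) and the identification of products in $\mathbf{Poiss}_F$ with componentwise products are routine.
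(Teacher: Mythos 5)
Your proof is correct and takes essentially the same route as the paper: the existence of $\mathcal{B}(P,\,U)$ for all $P$ makes $P \mapsto \mathcal{B}(P,\,U)$ a left adjoint of $U \otimes -$, right adjoints preserve products, and products in $\mathbf{Poiss}_F$ are computed on underlying vector spaces, so $U \otimes -$ must preserve infinite products of vector spaces. The only difference is that you carefully spell out the final step (surjectivity of $U \otimes F^{\mathbb{N}} \to U^{\mathbb{N}}$ failing for infinite dimensional $U$), which the paper dismisses as straightforward, and you also correctly note that the adjunction argument of Theorem~\ref{th11} goes through verbatim under the hypothesis of the corollary rather than under the finite dimensionality assumption in that theorem's statement.
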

\begin{proof}
The existence of a universal coacting Poisson algebra $\mathcal{P}(P, \, U)$ for all Poisson algebras $P$ induces a functor $\mathcal{L}(-,\,U)  \colon \mathbf{Poiss}_{F} \to \mathbf{Poiss}_F$ as in Proposition~\ref{pro}, 1). Moreover, by Theorem~\ref{th11} the functor $\mathcal{L}(-,\,U)$ has a right adjoint given by $U \otimes -  \colon \mathbf{Poiss}_{F} \to \mathbf{Poiss}_F$. In particular, the tensor functor $U \otimes -$ preserves limits. Recall (see for instance \cite[Proposition 3.2]{A1}) that limits in $\mathbf{Poiss}_F$ are constructed as simply the limits in ${}_{F}{\mathcal {M}}$ of the underlying vector spaces. Now it is straightforward to see that the tensor product functor $U \otimes - $ preserves products in ${}_{F}{\mathcal {M}}$ if and only if $U$ is finite dimensional.
\end{proof}

\begin{example}\label{ex1}
Let $U$ be a finite dimensional Poisson algebra and $\bigl(P_{i}\bigl)_{i \in I}$ a family of Poisson algebras whose coproduct in $\mathbf{Poiss}_{F}$ we denote by $P$. Then, in light of Theorem~\ref{th11} we have an isomorphism of Poisson algebras:
$$\mathcal{B}(P, U) \cong \coprod_{i
\in I} \mathcal{B}\bigl(P_{i}, U\bigl)$$
where $\coprod_{i
\in I} \mathcal{B}\bigl(P_{i}, U\bigl)$ denotes the coproduct in $\mathbf{Poiss}_{F}$ of the universal coacting Poisson algebras $\mathcal{B}\bigl(P_{i}, U\bigl)$, $i \in I$.
We refer the reader to \cite{A1} for more detail on the (co)completeness of the categories $\mathbf{Poiss}_{F}$, $\mathbf{PoissBialg}_F$, $\mathbf{PoissHopf}_F$ and explicit constructions of certain (co)limits including coproducts.  
\end{example}

\begin{theorem}\label{imp1}
Let $P$ and $U$ be two given Poisson algebras with $U$ finite dimensional, $\bigl(\mathcal{B}(P, U),\, \psi_{\mathcal{B}(P, U)}\bigl)$ the corresponding universal coacting Poisson algebra and $f \colon U \to P$ a Poisson algebra homomorphism. Then:
\begin{enumerate}
\item There exists a coassociative Poisson algebra homomorphism $\Delta_{f} \colon \mathcal{B}(P,\, U) \to \mathcal{B}(P,\, U) \otimes \mathcal{B}(P,\, U)$;
\item If $g \colon P \to U$ is another Poisson algebra homomorphism such that:
\begin{eqnarray}
\psi_{\mathcal{B}(P, U)} &=& \psi_{\mathcal{B}(P, U)} \circ (f \circ g)\label{eq111}\\
\psi_{\mathcal{B}(P, U)} &=& \bigl((g \circ f) \otimes \mathds{1}_{\mathcal{B}(P,\, U)}\bigl) \circ\, \psi_{\mathcal{B}(P, U)}\label{eq222}
\end{eqnarray}
then there exists a Poisson algebra homomorphism $\varepsilon_{g} \colon \mathcal{B}(P,\, U) \to F$ such that $\bigl(\mathcal{B}(P,\, U), \Delta_{f},\, \varepsilon_{g}\bigl)$ becomes a Poisson bialgebra.
\end{enumerate}
\end{theorem}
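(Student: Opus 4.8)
The key tool throughout is the universal property recorded in Corollary~\ref{corbij}: a Poisson algebra homomorphism out of $\mathcal{B}(P,U)$ into $Q$ is the same datum as a Poisson algebra homomorphism $P \to U \otimes Q$, and, crucially, two Poisson algebra homomorphisms $g_{1},\, g_{2}\colon \mathcal{B}(P,U) \to Q$ coincide as soon as $(\mathds{1}_{U} \otimes g_{1}) \circ \psi_{\mathcal{B}(P, U)} = (\mathds{1}_{U} \otimes g_{2}) \circ \psi_{\mathcal{B}(P, U)}$. Abbreviate $\mathcal{B} := \mathcal{B}(P,U)$ and $\psi := \psi_{\mathcal{B}(P, U)}$. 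The whole argument is a ``Yoneda-style'' reduction: every claimed identity of maps out of $\mathcal{B}$ is turned, via this uniqueness, into an identity of maps out of $P$ that can be checked by hand.

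For part (1) the plan is to define $\Delta_{f}$ as the homomorphism classified by the iterated coaction. The composite
\[
P \xrightarrow{\,\psi\,} U \otimes \mathcal{B} \xrightarrow{\,f \otimes \mathds{1}\,} P \otimes \mathcal{B} \xrightarrow{\,\psi \otimes \mathds{1}\,} U \otimes \mathcal{B} \otimes \mathcal{B}
\]
is a Poisson algebra homomorphism, each arrow being one (tensoring a Poisson homomorphism with an identity map preserves the property). Applying the universal property with $Q = \mathcal{B} \otimes \mathcal{B}$ yields a unique Poisson algebra homomorphism $\Delta_{f}\colon \mathcal{B} \to \mathcal{B} \otimes \mathcal{B}$ with $(\mathds{1}_{U} \otimes \Delta_{f}) \circ \psi = (\psi \otimes \mathds{1}) \circ (f \otimes \mathds{1}) \circ \psi$. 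To prove coassociativity I would compute $(\mathds{1}_{U} \otimes (\Delta_{f} \otimes \mathds{1})\Delta_{f}) \circ \psi$ and $(\mathds{1}_{U} \otimes (\mathds{1} \otimes \Delta_{f})\Delta_{f}) \circ \psi$ separately. Substituting the defining relation repeatedly and pushing the inner $\Delta_{f}$ past $\psi \otimes \mathds{1}$ (resp. $f \otimes \mathds{1}$), both expressions collapse to the single threefold composite $(\psi \otimes \mathds{1} \otimes \mathds{1})(f \otimes \mathds{1} \otimes \mathds{1})(\psi \otimes \mathds{1})(f \otimes \mathds{1})\psi$. By the uniqueness noted above this forces $(\Delta_{f} \otimes \mathds{1})\Delta_{f} = (\mathds{1} \otimes \Delta_{f})\Delta_{f}$.

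For part (2) I would first produce $\varepsilon_{g}$ by the same device: $\mu_{U} \circ g \colon P \to U \otimes F$ is a Poisson algebra homomorphism, hence classified by a unique $\varepsilon_{g}\colon \mathcal{B} \to F$ with $(\mathds{1}_{U} \otimes \varepsilon_{g}) \circ \psi = \mu_{U} \circ g$. Since $\mathcal{B}$ is commutative and both $\Delta_{f}$, $\varepsilon_{g}$ are Poisson algebra homomorphisms into the tensor-product (resp. ground-field) Poisson algebra, the only outstanding axioms are the two counit identities, each an equality of Poisson homomorphisms $\mathcal{B} \to \mathcal{B}$ that it suffices to verify after applying $(\mathds{1}_{U} \otimes -) \circ \psi$. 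Substituting the defining relation for $\Delta_{f}$ and then $(\mathds{1}_{U} \otimes \varepsilon_{g}) \circ \psi = \mu_{U} \circ g$, the identity for $(\mathds{1}_{\mathcal{B}} \otimes \varepsilon_{g})\Delta_{f}$ reduces, up to the canonical identification $U \otimes \mathcal{B} \otimes F \cong U \otimes \mathcal{B}$, to $\mu_{U \otimes \mathcal{B}} \circ \psi \circ (f \circ g)$, which equals $\psi$ exactly by hypothesis \eqref{eq111}; symmetrically, the identity for $(\varepsilon_{g} \otimes \mathds{1}_{\mathcal{B}})\Delta_{f}$ reduces to $((g \circ f) \otimes \mathds{1}_{\mathcal{B}}) \circ \psi$, which equals $\psi$ by hypothesis \eqref{eq222}. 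Thus $(\varepsilon_{g} \otimes \mathds{1})\Delta_{f} = \mathds{1}_{\mathcal{B}} = (\mathds{1} \otimes \varepsilon_{g})\Delta_{f}$, and $(\mathcal{B},\, \Delta_{f},\, \varepsilon_{g})$ is a Poisson bialgebra.

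The main obstacle is essentially bookkeeping: tracking which tensor slot each of $\Delta_{f}$, $\varepsilon_{g}$, $f$, $\psi$ acts on while commuting these maps past one another, and correctly matching the two (asymmetric) compatibility conditions \eqref{eq111} and \eqref{eq222} to the two counit axioms. Once the universal-property uniqueness is invoked to reduce each identity of maps out of $\mathcal{B}$ to the corresponding identity of maps out of $P$, no genuinely hard step remains.
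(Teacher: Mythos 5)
Your proposal is correct and follows essentially the same route as the paper: both define $\Delta_{f}$ and $\varepsilon_{g}$ by applying the universal property of $\bigl(\mathcal{B}(P,U),\,\psi_{\mathcal{B}(P,U)}\bigr)$ to the composite $\bigl((\psi_{\mathcal{B}(P,U)}\circ f)\otimes\mathds{1}\bigr)\circ\psi_{\mathcal{B}(P,U)}$ and to $\mu_{U}\circ g$ respectively, and then verify coassociativity and the two counit axioms by using the uniqueness half of the universal property to reduce each identity of maps out of $\mathcal{B}(P,U)$ to an identity of maps out of $P$, with hypotheses (\ref{eq111}) and (\ref{eq222}) entering exactly where you place them. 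The only cosmetic difference is that the paper packages the uniqueness step for coassociativity through an auxiliary homomorphism $\theta$ classified by the threefold composite, which is the same argument as your direct computation showing both iterates collapse to a common expression.
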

\begin{proof}
(1) Using Theorem~\ref{t1} we obtain a unique Poisson algebra homomorphism $\Delta_{f} \colon \mathcal{B}(P,\, U) \to \mathcal{B}(P,\, U) \otimes \mathcal{B}(P,\, U)$ for which the following diagram commutes:
\begin{eqnarray*}
\xymatrix{ P\ar[rrrrr]^-{\psi_{\mathcal{B}(P,\, U)}}\ar[rrrrrd]_-{\bigl((\psi_{\mathcal{B}(P,\, U)} \circ f)\otimes \mathds{1}_{\mathcal{B}(P,\, U)}\bigl)\circ\, \psi_{\mathcal{B}(P,\, U)}}  & {} & {} & {} & {} & {U \otimes \mathcal{B}(P,\, U)}\ar[d]^{\mathds{1}_{U} \otimes \Delta_{f}}  \\
{} & {} & {} & {} & {} & {U \otimes \mathcal{B}(P,\, U)  \otimes \mathcal{B}(P,\, U) }}
\end{eqnarray*}
\begin{equation}\label{bialg111}
{\rm i.e.} \,\,\,(\mathds{1}_{U} \otimes \Delta_{f}) \circ \psi_{\mathcal{B}(P,\, U)} = \bigl((\psi_{\mathcal{B}(P,\, U)} \circ f) \otimes \mathds{1}_{\mathcal{B}(P,\, U)}\bigl)\circ\, \psi_{\mathcal{B}(P,\, U)}
\end{equation}
We show first that $\Delta_{f}$ is coassociative. To this end, by Theorem~\ref{t1} we have a unique Poisson algebra homomorphism $\theta\colon \mathcal{B}(P,\, U) \to \mathcal{B}(P,\, U) \otimes \mathcal{B}(P,\, U) \otimes \mathcal{B}(P,\, U)$ such that the following diagram is commutative: 
$$
\xymatrix{P \ar[rr]^-{\psi_{\mathcal{B}(P,\, U)}}\ar[rrddd]_{\Bigl(\mathds{1}_{U}  \otimes \bigl((\Delta_{f} \otimes \mathds{1}_{\mathcal{B}(P,\, U)})\circ \Delta_{f}\bigl)\Bigl) \circ \psi_{\mathcal{B}(P,\, U)}}   &{}& U \otimes \mathcal{B}(P,\, U)\ar[ddd]^{\mathds{1}_{U} \otimes  \theta}\\
{} & {} & {}\\
{} & {} & {}\\
{} & {} & {U \otimes \mathcal{B}(P,\, U) \otimes \mathcal{B}(P,\, U) \otimes \mathcal{B}(P,\, U)}
} 
$$
\begin{equation}\label{105}
{\rm i.e.}\,\,\, (\mathds{1}_{U} \otimes \theta)  \circ \psi_{\mathcal{B}(P,\, U)} = \Bigl(\mathds{1}_{U}  \otimes \bigl((\Delta_{f} \otimes \mathds{1}_{\mathcal{B}(P,\, U)}) \circ \Delta_{f}\bigl)\Bigl) \circ\,  \psi_{\mathcal{B}(P,\, U)}.
\end{equation}
Obviously the Poisson algebra homomorphism $(\Delta_{f} \otimes \mathds{1}_{\mathcal{B}(P,\, U)}) \circ \Delta_{f}\colon \mathcal{B}(P,\, U) \to \mathcal{B}(P,\, U) \otimes \mathcal{B}(P,\, U) \otimes \mathcal{B}(P,\, U)$ makes the above diagram commute. We are left to prove that the Poisson algebra homomorphism $(\mathds{1}_{\mathcal{B}(P,\, U)}   \otimes \Delta_{f}) \circ \Delta_{f}$ makes the same diagram commutative. Indeed, we have:
\begin{eqnarray*}
&& \hspace*{-15mm}\Bigl(\mathds{1}_{U}  \otimes \bigl((\mathds{1}_{\mathcal{B}(P,\, U)} \otimes \Delta_{f}) \circ \Delta_{f}\bigl)\Bigl) \circ\, \psi_{\mathcal{B}(P,\, U)} = (\mathds{1}_{U}  \otimes \mathds{1}_{\mathcal{B}(P,\, U)}  \otimes \Delta_{f}) \circ \underline{(\mathds{1}_{U} \otimes \Delta_{f}) \circ \psi_{\mathcal{B}(P,\, U)}}\\
&\stackrel{(\ref{bialg111})} {=}& (\mathds{1}_{U}  \otimes \mathds{1}_{\mathcal{B}(P,\, U)}   \otimes \Delta_{f}) \circ \bigl((\psi_{\mathcal{B}(P,\, U)}\circ f) \otimes \mathds{1}_{\mathcal{B}(P,\, U)}\bigl) \circ\, \psi_{\mathcal{B}(P,\, U)}\\
&{=}& \bigl((\psi_{\mathcal{B}(P,\, U)}\circ f) \otimes \mathds{1}_{\mathcal{B}(P,\, U)} \otimes \mathds{1}_{\mathcal{B}(P,\, U)}\bigl) \circ \underline{(\mathds{1}_{U} \otimes \Delta_{f}) \circ\, \psi_{\mathcal{B}(P,\, U)}}\\
&\stackrel{(\ref{bialg111})} {=}& \bigl((\psi_{\mathcal{B}(P,\, U)}\circ f) \otimes \mathds{1}_{\mathcal{B}(P,\, U)} \otimes \mathds{1}_{\mathcal{B}(P,\, U)}\bigl) \circ \bigl((\psi_{\mathcal{B}(P,\, U)} \circ f) \otimes \mathds{1}_{\mathcal{B}(P,\, U)}\bigl)\circ\, \psi_{\mathcal{B}(P,\, U)}\\
&{=}&  \Bigl(\underline{\bigl((\psi_{\mathcal{B}(P,\, U)}\circ f) \otimes \mathds{1}_{\mathcal{B}(P,\, U)}\bigl) \circ \psi_{\mathcal{B}(P,\, U)}} \otimes \mathds{1}_{\mathcal{B}(P,\, U)}\Bigl) \circ (f \otimes \mathds{1}_{\mathcal{B}(P,\, U)})\circ\, \psi_{\mathcal{B}(P,\, U)}\\
&\stackrel{(\ref{bialg111})} {=}&  \Bigl((\mathds{1}_{U} \otimes \Delta_{f}) \circ \psi_{\mathcal{B}(P,\, U)}  \otimes \mathds{1}_{\mathcal{B}(P,\, U)}\Bigl) \circ (f \otimes \mathds{1}_{\mathcal{B}(P,\, U)})\circ\, \psi_{\mathcal{B}(P,\, U)}\\
&{=}& \bigl(\mathds{1}_{U} \otimes \Delta_{f}  \otimes \mathds{1}_{\mathcal{B}(P,\, U)}\bigl) \circ \underline{\bigl((\psi_{\mathcal{B}(P,\, U)} \circ f) \otimes \mathds{1}_{\mathcal{B}(P,\, U)}\bigl)\circ\, \psi_{\mathcal{B}(P,\, U)}}\\
&\stackrel{(\ref{bialg111})} {=}& \bigl(\mathds{1}_{U} \otimes \Delta_{f}  \otimes \mathds{1}_{\mathcal{B}(P,\, U)}\bigl) \circ (\mathds{1}_{U} \otimes \Delta_{f}) \circ \psi_{\mathcal{B}(P,\, U)}\\
&{=}& \Bigl(\mathds{1}_{U} \otimes \bigl((\Delta_{f}  \otimes \mathds{1}_{\mathcal{B}(P,\, U)}) \circ \Delta_{f} \bigl)\Bigl) \circ\, \psi_{\mathcal{B}(P,\, U)}
\end{eqnarray*}
and thus $\Delta_{F}$ is coassociative. 

(2) Let $\varepsilon_{g}\colon \mathcal{B}(P,\, U) \to F$ be the unique Poisson algebra homomorphism such that the following diagram is commutative:
\begin{equation}\label{001}
\xymatrix{ P\ar[rr]^-{\psi_{\mathcal{B}(P, U)}}\ar[rrd]_{\mu_{U} \circ g}  & {} & {U \otimes \mathcal{B}(P, U)}\ar[d]^{ \mathds{1}_{U} \otimes \varepsilon_{g}}  \\
{} & {} & {U \otimes F}
} \qquad {\rm i.e.}\,\,\, ( \mathds{1}_{U} \otimes \varepsilon_{g}) \circ \psi_{\mathcal{B}(P, U)} = \mu_{U} \circ g.
\end{equation}
The proof will be finished once we show that $(\mathds{1}_{\mathcal{B}(P,\, U)} \otimes \varepsilon_{g}) \circ \Delta_{f}  = \mu_{\mathcal{B}(P,\, U)}$ and $(\varepsilon_{g} \otimes \mathds{1}_{\mathcal{B}(P,\, U)}) \circ \Delta_{f}  = \tau \circ \mu_{\mathcal{B}(P,\, U)}$. It will be enough to prove that:
\begin{eqnarray}
\Bigl(\mathds{1}_{U} \otimes (\mathds{1}_{\mathcal{B}(P,\, U)} \otimes \varepsilon_{g}) \circ \Delta_{f}\Bigl) \circ \, \psi_{\mathcal{B}(P, U)} &=& \bigl(\mathds{1}_{U} \otimes \mu_{\mathcal{B}(P,\, U)}) \circ \, \psi_{\mathcal{B}(P, U)}\label{aa11}\\
\Bigl(\mathds{1}_{U} \otimes (\varepsilon_{g} \otimes \mathds{1}_{\mathcal{B}(P,\, U)} ) \circ \Delta_{f}\Bigl) \circ \, \psi_{\mathcal{B}(P, U)} &=& \bigl(\mathds{1}_{U} \otimes \tau \circ \mu_{\mathcal{B}(P,\, U)}) \circ \, \psi_{\mathcal{B}(P, U)}\label{aa12}
\end{eqnarray}
To this end, we have:
\begin{eqnarray*}
&& \hspace*{-15mm} \Bigl(\mathds{1}_{U} \otimes (\mathds{1}_{\mathcal{B}(P,\, U)} \otimes \varepsilon_{g}) \circ \Delta_{f}\Bigl) \circ \, \psi_{\mathcal{B}(P, U)} = \bigl(\mathds{1}_{U} \otimes \mathds{1}_{\mathcal{B}(P,\, U)} \otimes \varepsilon_{g}\bigl)\circ \underline{\bigl(\mathds{1}_{U} \otimes \Delta_{f}\bigl) \circ \, \psi_{\mathcal{B}(P, U)}}\\
&\stackrel{(\ref{bialg111})} {=}& \bigl(\mathds{1}_{U} \otimes \mathds{1}_{\mathcal{B}(P,\, U)} \otimes \varepsilon_{g}\bigl)\circ \bigl(\psi_{\mathcal{B}(P, U)} \circ f \otimes \mathds{1}_{\mathcal{B}(P,\, U)}\bigl) \circ \, \psi_{\mathcal{B}(P, U)}\\
&=& \bigl(\psi_{\mathcal{B}(P, U)} \circ f \otimes \mathds{1}_{F}\bigl) \circ \underline{\bigl(\mathds{1}_{U} \otimes \varepsilon_{g}\bigl)\circ \,  \psi_{\mathcal{B}(P, U)}}\\
&\stackrel{(\ref{001})} {=}&  \bigl(\psi_{\mathcal{B}(P, U)} \circ f \otimes \mathds{1}_{F}\bigl) \circ\, \mu_{U} \circ g\\
&=&  \bigl(\psi_{\mathcal{B}(P, U)} \circ f \otimes \mathds{1}_{F}\bigl) \circ\ (g \otimes 1_{F})\\
&=&  \underline{\psi_{\mathcal{B}(P, U)} \circ f  \circ g} \otimes 1_{F} \stackrel{(\ref{eq111})} {=} \psi_{\mathcal{B}(P, U)} \otimes 1_{F} = \bigl(\mathds{1}_{U} \otimes \mu_{\mathcal{B}(P,\, U)}\bigl) \circ \,\psi_{\mathcal{B}(P, U)} 
\end{eqnarray*}
Hence, \ref{aa11} holds. Furthermore, \ref{aa12} can be shown to hold true in a similar manner by using \ref{eq222}.
\end{proof}

\begin{definition}\label{01}
Let $P$ be a Poisson algebra. The \textit{universal coacting Poisson Hopf algebra of $P$} is a Poisson Hopf algebra $\mathcal{H}(P)$ together with a Poisson comodule algebra structure $\rho_{\mathcal{H}(P)}\colon P \to P \otimes \mathcal{H}(P)$ such that for any Poisson Hopf algebra $H$ and any Poisson comodule algebra structure $\rho_{H} \colon P \to P \otimes H$ there exists a unique Poisson Hopf algebra homomorphism $g\colon \mathcal{H}(P) \to H$ which makes the following diagram commutative:
\begin{equation}\label{0}
\xymatrix{ P\ar[r]^-{\rho_{\mathcal{H}(P)}}\ar[rd]_{\rho_{H}}  & {P \otimes \mathcal{H}(P)}\ar[d]^{ \mathds{1}_{P} \otimes g}  \\
{} & {P \otimes H}
} \qquad {\rm i.e.}\,\,\, ( \mathds{1}_{P} \otimes g) \circ \rho_{\mathcal{H}(P)} = \rho_{H}.
\end{equation}
\end{definition}
The corresponding definition of a universal coacting Poisson bialgebra is obtained by
replacing each occurrence of \textit{Poisson Hopf algebra} by \textit{Poisson bialgebra} in
the above definition.

\begin{theorem}\label{t2}
Let $P$ be a finite dimensional Poisson algebra. Then:
\begin{enumerate}
\item The universal coacting Poisson algebra $\mathcal{B}(P)$ of $P$ admits a unique coalgebra structure $(\Delta,\, \varepsilon)$ which makes $\psi_{\mathcal{B}(P)}$ a Poisson comodule algebra structure. Moreover, with this coalgebra structure $\bigl(\mathcal{B}(P),\, \Delta,\, \varepsilon \bigl)$ becomes a Poisson bialgebra and $\bigl(\mathcal{B}(P),\, \psi_{\mathcal{B}(P)}\bigl)$ is the universal coacting Poisson bialgebra of $P$;
\item The pair $\bigl(\mathcal{H}(P) = H(\mathcal{B}(P)), \, \rho_{\mathcal{H}(P)} =  (\mathds{1}_{P} \otimes \alpha_{\mathcal{B}(P)}) \circ \rho_{\mathcal{B}(P)} \bigl)$ is the universal coacting Poisson Hopf algebra of $P$.
\end{enumerate}
\end{theorem}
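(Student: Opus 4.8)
The plan is to derive both parts almost entirely from the machinery already assembled, the only genuine computation being the verification that the comparison map of part~(1) respects the comultiplication.

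For the existence statement in~(1), I would apply Theorem~\ref{imp1} in the special case $U = P$ and $f = g = \mathds{1}_{P}$. With this choice $f \circ g = g \circ f = \mathds{1}_{P}$, so the compatibility conditions \ref{eq111} and \ref{eq222} collapse to the tautology $\psi_{\mathcal{B}(P)} = \psi_{\mathcal{B}(P)}$ and hold automatically. Theorem~\ref{imp1} then produces a coassociative Poisson algebra homomorphism $\Delta := \Delta_{\mathds{1}_{P}}$ and a Poisson algebra homomorphism $\varepsilon := \varepsilon_{\mathds{1}_{P}}$ turning $\bigl(\mathcal{B}(P),\, \Delta,\, \varepsilon\bigr)$ into a Poisson bialgebra. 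The key observation is that, specialized to $f = g = \mathds{1}_{P}$, the defining identity \ref{bialg111} reads $(\mathds{1}_{P} \otimes \Delta) \circ \psi_{\mathcal{B}(P)} = (\psi_{\mathcal{B}(P)} \otimes \mathds{1}_{\mathcal{B}(P)}) \circ \psi_{\mathcal{B}(P)}$ while \ref{001} reads $(\mathds{1}_{P} \otimes \varepsilon) \circ \psi_{\mathcal{B}(P)} = \mu_{P}$; these are exactly the coassociativity and counit axioms saying that $\psi_{\mathcal{B}(P)}$ is a right $\mathcal{B}(P)$-comodule. Since $\psi_{\mathcal{B}(P)}$ is a Poisson algebra homomorphism by its construction in Theorem~\ref{t1}, it is a Poisson comodule algebra structure.

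The uniqueness in~(1) rests on the cancellation property encoded in Corollary~\ref{corbij}: two Poisson algebra homomorphisms $g_{1},\, g_{2} \colon \mathcal{B}(P) \to Q$ with $(\mathds{1}_{P} \otimes g_{1}) \circ \psi_{\mathcal{B}(P)} = (\mathds{1}_{P} \otimes g_{2}) \circ \psi_{\mathcal{B}(P)}$ must coincide. Indeed, any coalgebra structure $(\Delta',\, \varepsilon')$ making $\psi_{\mathcal{B}(P)}$ a comodule has to satisfy the same two identities \ref{bialg111} and \ref{001}, so applying cancellation with $Q = \mathcal{B}(P) \otimes \mathcal{B}(P)$ and with $Q = F$ forces $\Delta' = \Delta$ and $\varepsilon' = \varepsilon$. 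For the universal property in~(1), given a Poisson bialgebra $B$ and a Poisson comodule algebra structure $\rho_{B} \colon P \to P \otimes B$, I would first invoke Theorem~\ref{t1} and Corollary~\ref{corbij} to obtain the unique Poisson algebra homomorphism $g \colon \mathcal{B}(P) \to B$ with $(\mathds{1}_{P} \otimes g) \circ \psi_{\mathcal{B}(P)} = \rho_{B}$, and then show that $g$ is automatically a morphism of coalgebras. This is the main obstacle and the only real computation. To prove $\Delta_{B} \circ g = (g \otimes g) \circ \Delta$ I would compare both sides after precomposing with $(\mathds{1}_{P} \otimes -) \circ \psi_{\mathcal{B}(P)}$: the left-hand side becomes $(\mathds{1}_{P} \otimes \Delta_{B}) \circ \rho_{B}$, which by comodule coassociativity of $\rho_{B}$ equals $(\rho_{B} \otimes \mathds{1}_{B}) \circ \rho_{B}$; the right-hand side, using \ref{bialg111} together with $(\mathds{1}_{P} \otimes g) \circ \psi_{\mathcal{B}(P)} = \rho_{B}$, unwinds to the same expression $(\rho_{B} \otimes \mathds{1}_{B}) \circ \rho_{B}$. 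Cancellation with $Q = B \otimes B$ then yields the identity. An identical argument using \ref{001} and the counit axiom of $\rho_{B}$ gives $\varepsilon_{B} \circ g = \varepsilon$, so $g$ is a Poisson bialgebra homomorphism, and its uniqueness is inherited from its uniqueness as a Poisson algebra homomorphism. Hence $\bigl(\mathcal{B}(P),\, \psi_{\mathcal{B}(P)}\bigr)$ is the universal coacting Poisson bialgebra of $P$.

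Finally, for~(2) I would set $\mathcal{H}(P) = H(\mathcal{B}(P))$ with the bialgebra homomorphism $\alpha_{\mathcal{B}(P)}$ of Proposition~\ref{00.00} and $\rho_{\mathcal{H}(P)} = (\mathds{1}_{P} \otimes \alpha_{\mathcal{B}(P)}) \circ \psi_{\mathcal{B}(P)}$. First I would check that pushing a comodule algebra structure forward along the Poisson bialgebra homomorphism $\alpha_{\mathcal{B}(P)}$ again yields a Poisson comodule algebra structure: the comodule coassociativity and counit axioms for $\rho_{\mathcal{H}(P)}$ follow from those for $\psi_{\mathcal{B}(P)}$ together with the fact that $\alpha_{\mathcal{B}(P)}$ preserves comultiplication and counit, and the Poisson algebra homomorphism property survives composition with $\mathds{1}_{P} \otimes \alpha_{\mathcal{B}(P)}$. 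For the universal property, given a Poisson Hopf algebra $H$ and a Poisson comodule algebra structure $\rho_{H}$, part~(1) supplies a unique Poisson bialgebra homomorphism $\overline{g} \colon \mathcal{B}(P) \to H$ with $(\mathds{1}_{P} \otimes \overline{g}) \circ \psi_{\mathcal{B}(P)} = \rho_{H}$, and Proposition~\ref{00.00} lifts $\overline{g}$ to a unique Poisson Hopf algebra homomorphism $g \colon \mathcal{H}(P) \to H$ with $g \circ \alpha_{\mathcal{B}(P)} = \overline{g}$. The required identity $(\mathds{1}_{P} \otimes g) \circ \rho_{\mathcal{H}(P)} = \rho_{H}$ is then immediate, and uniqueness follows by running the two universal properties in reverse: any competitor $g'$ gives a Poisson bialgebra homomorphism $g' \circ \alpha_{\mathcal{B}(P)}$ inducing $\rho_{H}$, which forces $g' \circ \alpha_{\mathcal{B}(P)} = \overline{g}$ by~(1) and hence $g' = g$ by Proposition~\ref{00.00}.
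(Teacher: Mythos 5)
Your proposal is correct and follows essentially the same route as the paper: specialize Theorem~\ref{imp1} to $U=P$, $f=g=\mathds{1}_{P}$ to get the Poisson bialgebra structure, use the universal property of $\mathcal{B}(P)$ (equivalently, Corollary~\ref{corbij}) to produce the comparison map and verify it is a coalgebra morphism by the same chain of identities, then compose with the free Poisson Hopf algebra adjunction of Proposition~\ref{00.00} for part~(2). The only addition is that you spell out the uniqueness of the coalgebra structure via the cancellation property, which the paper leaves implicit in the uniqueness clause of the universal property.
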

\begin{proof}
1) We apply Theorem~\ref{imp1} for $P=U$ and $f = g = \mathds{1}_{P}$. Hence, we have a unique Poisson algebra homomorphisms $\Delta \colon \mathcal{B}(P) \to \mathcal{B}(P) \otimes \mathcal{B}(P)$ for which diagram \ref{bialg111} commutes, i.e.:
\begin{equation}\label{bialg1}
(\mathds{1}_{P} \otimes \Delta) \circ \psi_{\mathcal{B}(P)} = (\psi_{\mathcal{B}(P)} \otimes \mathds{1}_{\mathcal{B}(P)})\circ \psi_{\mathcal{B}(P)}
\end{equation}
and a unique Poisson algebra homomorphism $\varepsilon \colon  \mathcal{B}(P) \to F$ such that diagram \ref{001} commutes, i.e.:
\begin{equation}\label{bialg2}
(\mathds{1}_{P} \otimes \varepsilon) \circ \psi_{\mathcal{B}(P)} = \mu_{P}. 
\end{equation}
As shown in the proof of Theorem~\ref{imp1}, $(\mathcal{B}(P), \Delta, \varepsilon)$ is a Poisson bialgebra. Furthermore, note that the compatibilities \ref{bialg1} and \ref{bialg2} imply that $\psi_{\mathcal{B}(P)}$ is also a comodule structure on $P$; hence $\psi_{\mathcal{B}(P)}$ is a Poisson $\mathcal{B}(P)$-comodule algebra structure on $P$.

We claim now that $\bigl(\mathcal{B}(P),\, \psi_{\mathcal{B}(P)}\bigl)$ is in fact the universal coacting Poisson bialgebra of $P$. Indeed, consider another Poisson bialgebra $B$ together with a Poisson comodule algebra structure $\rho_{B} \colon P \to P \otimes B$ on $P$. As $\bigl(\mathcal{B}(P),\, \psi_{\mathcal{B}(P)}\bigl)$ is the universal coacting Poisson algebra of $P$ we obtain a unique Poisson algebra homomorphism $g\colon \mathcal{B}(P) \to B$ which makes the following diagram commute:
\begin{eqnarray}\label{coalg1}
\xymatrix{ P\ar[r]^-{\psi_{\mathcal{B}(P)}}\ar[rd]_{\rho_{B}}  & {P \otimes \mathcal{B}(P)}\ar[d]^{ \mathds{1}_{P} \otimes g}  \\
{} & {P \otimes B}
} \qquad {\rm i.e.}\,\,\, ( \mathds{1}_{P} \otimes g) \circ \psi_{\mathcal{B}(P)} = \rho_{B}.
\end{eqnarray}
We are left to show that $g$ is also a coalgebra homomorphism. To this end, using again the fact that $\bigl(\mathcal{B}(P),\, \psi_{\mathcal{B}(P)}\bigl)$ is the universal coacting Poisson algebra of $P$, we obtain a unique Poisson algebra homomorphism $\tau \colon P \to P \otimes B \otimes B$ such that the following diagram is commutative:
\begin{equation}\label{101}
\xymatrix{ P\ar[r]^-{\psi_{\mathcal{B}(P)}}\ar[rdd]_{\bigl(\mathds{1}_{P} \otimes\, \Delta_{B} \circ g\bigl)\circ \psi_{\mathcal{B}(P)}}  & {P \otimes \mathcal{B}(P)}\ar[dd]^{ \mathds{1}_{P} \otimes \tau}  \\
{} & {} \\
{} & {P \otimes B \otimes B}
} \qquad {\rm i.e.}\,\,\, (\mathds{1}_{P} \otimes \tau) \circ \psi_{\mathcal{B}(P)} = \bigl(\mathds{1}_{P} \otimes\, \Delta_{B} \circ g\bigl)\circ \psi_{\mathcal{B}(P)}
\end{equation}  
Obviously $\Delta_{B} \circ g$ makes diagram~\ref{101} commute; the proof will be finished once we show that $(g \otimes g) \circ \Delta$ makes the same diagram commutative. Indeed, we have:
\begin{eqnarray*}
\bigl(\mathds{1}_{P} \otimes\, (g \otimes g) \circ \Delta\bigl)\circ \,\psi_{\mathcal{B}(P)} &=& \bigl(\mathds{1}_{P} \otimes g \otimes g \bigl)\circ \underline{\bigl(\mathds{1}_{P} \otimes \Delta\bigl)\circ \, \psi_{\mathcal{B}(P)}}\\
&\stackrel{(\ref{bialg1})} {=}& \bigl(\mathds{1}_{P} \otimes g \otimes g \bigl)\circ (\psi_{\mathcal{B}(P)} \otimes \mathds{1}_{\mathcal{B}(P)})\circ \psi_{\mathcal{B}(P)}\\
&=& \bigl(\underline{(\mathds{1}_{P} \otimes g) \circ  \psi_{\mathcal{B}(P)}} \otimes g\bigl)\circ \, \psi_{\mathcal{B}(P)}\\
&\stackrel{(\ref{coalg1})} {=}& \bigl(\rho_{B} \otimes g\bigl)\circ \, \psi_{\mathcal{B}(P)}\\
&=& (\rho_{B} \otimes  \mathds{1}_{B})\circ \underline{(\mathds{1}_{P} \otimes g) \circ \, \psi_{\mathcal{B}(P)}}\\
&\stackrel{(\ref{coalg1})} {=}& \underline{(\rho_{B} \otimes  \mathds{1}_{B})\circ \rho_{B}}\\
&=& (\mathds{1}_{P} \otimes \Delta_{B}) \circ \underline{\rho_{B}}\\
&\stackrel{(\ref{coalg1})} {=}& (\mathds{1}_{P} \otimes \Delta_{B}) \circ (\mathds{1}_{P} \otimes g) \circ \psi_{\mathcal{B}(P)}\\
&=& \bigl(\mathds{1}_{P} \otimes \Delta_{B} \circ g\bigl) \circ \,\psi_{\mathcal{B}(P)}
\end{eqnarray*}
as desired. Similarly it can be proved that $\varepsilon_{B} \circ g = \varepsilon$.

2) Recall that $\alpha_{\mathcal{B}(P)}$ is a Poisson bialgebra homomorphism and therefore $(\mathds{1}_{P} \otimes \alpha_{\mathcal{B}(P)}) \circ \rho_{\mathcal{B}(P)}$ is a Poisson comodule algebra structure on $P$. Consider now $H$ to be a Poisson Hopf algebra and $\rho_{H} \colon P \to P \otimes H$ a Poisson comodule algebra structure on $P$. Using the first part of the proof we obtain a unique Poisson bialgebra homomorphism $h \colon \mathcal{B}(P) \to H$ which renders the following diagram commutative:
\begin{eqnarray}\label{final1}
\xymatrix{ P\ar[r]^-{\rho_{\mathcal{B}(P)}}\ar[rd]_{\rho_{H}}  & {P \otimes \mathcal{B}(P)}\ar[d]^{ \mathds{1}_{P} \otimes h}  \\
{} & {P \otimes H}
} \qquad {\rm i.e.}\,\,\, ( \mathds{1}_{P} \otimes h) \circ \rho_{\mathcal{B}(P)} = \rho_{H}.
\end{eqnarray}
Now Proposition~\ref{00.00} yields a unique Poisson Hopf algebra homomorphism $g \colon H(\mathcal{B}(P)) \to H$ such that the following diagram is commutative:
\begin{eqnarray}\label{final2}
\xymatrix{ \mathcal{B}(P)\ar[r]^-{\alpha_{\mathcal{B}(P)}}\ar[rd]_{h}  & {H(\mathcal{B}(P))}\ar[d]^{ g}  \\
{} & {H}
} \qquad {\rm i.e.}\,\,\, g \circ \alpha_{\mathcal{B}(P)} = h.
\end{eqnarray}
We will show that $g$ is the unique Poisson Hopf algebra homomorphism which makes the following diagram commutative:
\begin{eqnarray*}
\xymatrix{ P\ar[rrrrr]^-{\bigl(\mathds{1}_{P} \otimes \alpha_{\mathcal{B}(P)}\bigl) \circ \rho_{\mathcal{B}(P)}}\ar[rrrrrd]_-{\rho_{H}} & {} & {} & {} & {} & {P \otimes H(\mathcal{B}(P))}\ar[d]^{\mathds{1}_{P} \otimes g}  \\
{} & {} & {} & {} & {} & {P \otimes H}}
\end{eqnarray*}
Indeed, we have:
\begin{eqnarray*}
(\mathds{1}_{P} \otimes g) \circ \bigl(\mathds{1}_{P} \otimes \alpha_{\mathcal{B}(P)}\bigl) \circ \rho_{\mathcal{B}(P)} &=&  \bigl(\mathds{1}_{P} \otimes \underline{g \circ \alpha_{\mathcal{B}(P)}}\bigl) \circ \rho_{\mathcal{B}(P)}\\
&\stackrel{(\ref{final2})} {=}& \underline{\bigl(\mathds{1}_{P} \otimes h \bigl) \circ \rho_{\mathcal{B}(P)}}\\
&\stackrel{(\ref{final1})} {=}& \rho_{H}.
\end{eqnarray*}
as desired. The fact that $g$ is the unique Poisson Hopf algebra homomorphism which renders the above diagram commutative follows from the uniqueness of the Poisson bialgebra homomorphism $h$ for which diagram~\ref{final1} is commutative. Therefore, $\bigl(H(\mathcal{B}(P)), \, (\mathds{1}_{P} \otimes \alpha_{\mathcal{B}(P)}) \circ \rho_{\mathcal{B}(P)} \bigl)$ is the universal coacting Poisson Hopf algebra of $P$ and the proof is now finished.
\end{proof}

\begin{corollary}
Let $P$ be a Poisson algebra. For any Poisson Hopf algebra $H$ we have a bijective correspondence between:
\begin{enumerate}
\item[(1)] Poisson comodule algebra structures $\rho_{H} \colon P \to P \otimes H$ on $P$;
\item[(2)] Poisson Hopf algebra homomorphisms $g \colon \mathcal{H}(P) \to H$.
\end{enumerate} 
\end{corollary}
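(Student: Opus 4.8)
The plan is to exhibit two mutually inverse maps between the sets (1) and (2), both governed by the universal property of $\bigl(\mathcal{H}(P),\, \rho_{\mathcal{H}(P)}\bigl)$ established in Theorem~\ref{t2}, part 2. Since we invoke $\mathcal{H}(P)$ we tacitly assume $P$ finite dimensional, so that Theorem~\ref{t2} applies.

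First I would define a map $\Phi \colon (2) \to (1)$ by pullback along the universal coaction: to a Poisson Hopf algebra homomorphism $g \colon \mathcal{H}(P) \to H$ assign
$$\Phi(g) := (\mathds{1}_{P} \otimes g) \circ \rho_{\mathcal{H}(P)} \colon P \to P \otimes H.$$
The only thing to check here is that $\Phi(g)$ is again a Poisson comodule algebra structure on $P$. Since $\rho_{\mathcal{H}(P)}$ is a Poisson comodule algebra structure and $g$, being a morphism of Poisson Hopf algebras, is simultaneously a Poisson algebra homomorphism and a coalgebra homomorphism, the composite $(\mathds{1}_{P} \otimes g)\circ \rho_{\mathcal{H}(P)}$ inherits all three conditions in (\ref{1.2}) as well as the comodule coassociativity and counit axioms. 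This verification is routine and mirrors the first sentence in the proof of Theorem~\ref{t2}, part 2, applied to $g$ in place of $\alpha_{\mathcal{B}(P)}$; I would only spell out the coassociativity axiom $(\mathds{1}_{P} \otimes \Delta_{H})\circ \Phi(g) = (\Phi(g) \otimes \mathds{1}_{H})\circ \Phi(g)$, which follows from $g$ being a coalgebra map together with the comodule axiom for $\rho_{\mathcal{H}(P)}$.

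In the reverse direction, the universal property supplies a map $\Psi \colon (1) \to (2)$: to each Poisson comodule algebra structure $\rho_{H} \colon P \to P \otimes H$ it assigns the unique Poisson Hopf algebra homomorphism $g \colon \mathcal{H}(P) \to H$ satisfying $(\mathds{1}_{P} \otimes g)\circ \rho_{\mathcal{H}(P)} = \rho_{H}$, whose existence and uniqueness are guaranteed by Theorem~\ref{t2}, part 2. The identity $\Phi(\Psi(\rho_{H})) = \rho_{H}$ is then immediate, being exactly the defining relation of $g = \Psi(\rho_{H})$. Conversely, given $g \in (2)$, the homomorphism $\Psi(\Phi(g))$ is by definition the unique Poisson Hopf algebra map $g'$ with $(\mathds{1}_{P} \otimes g')\circ \rho_{\mathcal{H}(P)} = \Phi(g) = (\mathds{1}_{P} \otimes g)\circ \rho_{\mathcal{H}(P)}$; since $g$ itself satisfies this relation, the uniqueness clause forces $g' = g$, that is $\Psi(\Phi(g)) = g$. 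Hence $\Phi$ and $\Psi$ are mutually inverse bijections.

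The argument is essentially formal once Theorem~\ref{t2} is in hand, so I do not anticipate a genuine obstacle; the only point demanding care is the well-definedness of $\Phi$, namely confirming that pulling back the universal comodule algebra structure along an arbitrary Poisson Hopf algebra homomorphism again yields a Poisson comodule algebra structure. This rests on the fact, recorded in the preliminaries, that a morphism of Poisson Hopf algebras is at once a coalgebra morphism and a Poisson algebra morphism, so that both the multiplicativity and bracket compatibility of (\ref{1.2}) and the coassociativity and counit axioms are preserved under composition with $\mathds{1}_{P} \otimes g$.
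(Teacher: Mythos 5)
Your proof is correct and is exactly the argument the paper intends: the corollary is stated without proof as an immediate formal consequence of the universal property of $\bigl(\mathcal{H}(P),\,\rho_{\mathcal{H}(P)}\bigl)$ from Theorem~\ref{t2}, and your maps $\Phi$ (pullback of the universal coaction along $g$) and $\Psi$ (the unique morphism supplied by the universal property) are the standard mutually inverse correspondences, with the one nontrivial check — that $(\mathds{1}_{P}\otimes g)\circ\rho_{\mathcal{H}(P)}$ is again a Poisson comodule algebra structure — handled just as the paper handles the analogous point for $\alpha_{\mathcal{B}(P)}$ in the proof of Theorem~\ref{t2}(2). You were also right to note that finite dimensionality of $P$ must be assumed for $\mathcal{H}(P)$ to exist, a hypothesis the paper's own statement of the corollary omits.
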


\begin{example}
Let $P$ be a finite dimensional Poisson bialgebra and $\bigl(Q_{i}\bigl)_{i \in I}$ a family of Poisson bialgebras such that $Q_{i} = P$ for all $i \in I$. Then, using Example~\ref{ex1} we have an isomorphism between the following Poisson algebras: $$\mathcal{B}\bigl( \coprod_{i \in I} Q_{i},\,P\bigl) \cong \coprod_{i \in I} \mathcal{B}(P).$$
Now $ \mathcal{B}(P)$ is in fact a Poisson bialgebra by Theorem~\ref{t2} and moreover, \cite[Theorem 3.3]{A1} implies that $\coprod_{i \in I} \mathcal{B}(P)$ is also a Poisson bialgebra. Hence, $\mathcal{B}\bigl( \coprod_{i \in I} Q_{i},\,P\bigl)$ can be endowed with a Poisson bialgebra structure for any set $I$.
\end{example}

\end{document}